\documentclass[12pt]{article}
\usepackage{a4wide}

\usepackage[a4paper, margin=2.3 cm]{geometry}
\usepackage{amsmath,amssymb,amsthm}
\usepackage{color}
\usepackage{parskip}
\usepackage{hyperref}
\usepackage{parskip}
\usepackage{setspace}
\usepackage{graphicx}


\newtheorem{theorem}{Theorem}[section]

\newtheorem{corollary}[theorem]{Corollary}
\newtheorem{lemma}[theorem]{Lemma}
\newtheorem{proposition}[theorem]{Proposition}

\newtheorem*{definition*}{Definition}

\begin{document}
\title{Dot-product sets and simplices over finite rings}

\author{ Nguyen Van The \thanks{University of Science, Vietnam National University - Hanoi, Email: nguyenvanthe\_t61@hus.edu.vn} \and Le Anh Vinh\thanks{Vietnam National University - Hanoi, Email: vinhla@vnu.edu.vn. Vietnam Institute of Educational Sciences. Email: vinhle@vnies.edu.vn}}
\date{}
\maketitle  
\begin{abstract}
In this paper, we study dot-product sets and $k$-simplices in $\mathbb{Z}_n^d$ for odd $n,$ where $\mathbb{Z}_n$ is the ring of residues modulo $n$. We show that if $E$ is sufficiently large then the dot-product set of $E$ covers the whole ring. In higher dimensional cases, if $E$ is sufficiently large then the set of simplices and the set of dot-product simplices determined by $E$, up to congurence, have positive densities.
\end{abstract}

\section{Introduction}

The Erd\H{o}s distinct distance problem asks for the minimal number of distinct distances determined by a finite point set in $\mathbb{R}^d$, $d\geq 2$. This problem in the Euclidean plane has been solved by Guth and Katz \cite{guth-katz}. They show that a set of $N$ points in $\mathbb{R}^2$ has at least $cN/\log N$ distinct distances. 

Let $\mathbb{F}_q$ denote a finite field with $q$ elements, where $q$ is an odd prime power. For $E \subset \mathbb{F}^d_q$ ($d \geq 2$), the finite
analogue of the Erd\H{o}s distinct distance problem is to determine the smallest
possible cardinality of the set
\[ \Delta (E) =\{\|x- y\|= (x_1 - y_1)^2 +
   \ldots + (x_d - y_d)^2 : x, y \in E\}
   \subset \mathbb{F}_q . \]
This problem was first studied by Bourgain, Katz, and Tao \cite{bkt}. They showed that if $q$ is a prime, $q \equiv 3$ (mod $4$), then for
every $\epsilon > 0$ and $E \subset \mathbb{F}^2_q$ with
$|E| \ll q^{2-\epsilon}$, there exists $\delta > 0$ such that
$| \Delta (E) | \gg |E|^{\frac{1}{2} + \delta}$. The relationship between
$\epsilon$ and $\delta$ in their arguments, however, is difficult to
determine and to go up to higher dimensional cases. Here and throughout, $X \gg Y$ means that there exists $C>0$ such that $X \geq CY$.

Using Fourier analytic methods, Iosevich and Rudnev \cite{ir}  showed that for any odd prime power $q$ and any set $E \subset
\mathbb{F}^d_q$ of cardinality $|E| \gg q^{d / 2}$, we have $| \Delta (E) | \gg \min \left\{ q, q^{\frac{d - 1}{2}}
   |E| \right\} .$ Iosevich and Rudnev reformulated the question in analogy with the Falconer distance problem: \textit{How large
does $E \subset \mathbb{F}_q^d$, $d \geq 2$, needed to be ensure that $\Delta(E)$ contains a positive proportion of the elements of $\mathbb{F}_q$?} The above result implies that if $|E | \gg q^{\frac{d+1}{2}}$, then $\Delta(E) = \mathbb{F}_q$. This matches with Falconer's result in Euclidean setting that for a set $E$ with Hausdorff dimension greater than $(d+1)/2$, the distance set of $E$ is of positive measure. Hart, Iosevich, Koh and Rudnev \cite{iosevich-trans} that the exponent $(d+1)/2$ is sharp in odd dimensions, at least in general fields. In even dimensions, it is still conjectured that the correct exponent is $d/2$.
Chapman et al. \cite{chapman} made the first improvement by showing that if $E \subset \mathbb{F}_q^2$ satisfies $|E| \geq q^{4/3}$ then $|\Delta(E)| \gg cq$. In a recent paper \cite{murphy}, Murphy et al. improved the exponent $4/3$ to $5/4$ in the case of prime fields.  

In \cite{cip}, Covert, Iosevich, and Pakianathan extended the Erd\H{o}s distinct distances problem to the setting of finite cyclic rings $\mathbb{Z}_{p^l} = \mathbb{Z}/p^l\mathbb{Z}$, where p is a fixed odd prime and $l\geq 2$. Precisely, they proved that if  $E \subset \mathbb{Z}_q^d$ of cardinality \[ |E| \gg r(r+1)q^{\frac{(2r-1)d}{2r}+\frac{1}{2r}},\] then the distance set determined by $E$ will cover all units in $\mathbb{Z}_{p^l}$. In \cite{cov}, Covert extended the problem the ring of residues modulo $n$ for an arbitrary odd $n$. Let $p$ be the smallest prime divisor of $n$ and $\tau(n)$ be the number of divisors of $n$, Covert showed that if $|E| \gg \frac{\tau(n)n^d}{p^{(d-2)/2}}$ then the distance set determined by $E$ will cover all elements of the ring.  

Let
$E \subset \mathbb{F}_q^d$, we define the dot-product set of $E$ as follow
\[ \Pi(E) := \{ x \cdot y : x, y \in E \} \subset \mathbb{F}_q,\]
where $x \cdot y = x_1y_1 + \ldots x_dy_d$. Similarly, we can ask a question for the dot-product set instead of the distance set: \textit{How large does $E$ need to
ensure that the dot-product set $\Pi(E)$ can cover the whole field or at least a positive proportion of the field?} Hart and Iosevich
\cite{hi1}, using exponential sums, showed that for the product set to cover the whole field, one can take $|E| > q^{(d +
1) / 2}$ for any $d \geq 2$. Covert, Iosevich, and Pakianathan \cite{cip} extended the problem to the setting of finite cyclic rings $\mathbb{Z}_{p^l}$. They proved that if $E \subset \mathbb{Z}_q^d$ of cardinality 
\[ |E| \gg rq^{\frac{(2r-1)d}{r}+\frac{1}{2r}}, \]
then the dot-product set covers all units in $\mathbb{Z}_q$. In \cite{vinh3}, the second listed author also studied this result over the ring of residues modulo $n$ for an arbitrary $n$. In this paper, we will further extend the problem to cover the whole ring. Note that, our result is in line with the result of Covert in \cite{cov} for the Erd\H{o}s distinct distances problem.
 
A classical result due to Furstenberg, Katznelson and Weiss \cite{fkw} states that
if $E \subset \mathbb{R}^2$ of positive upper Lebesgue density,
then for any $\delta > 0$, the $\delta$-neighborhood of $E$ contains
a congruent copy of a sufficiently large dilate of every three-point
configuration. In the case of $k$-simplex, ussing Fourier analytic techniques, Bourgain \cite{b1} showed that a set $E$ of positive
upper Lebesgue density always contains a sufficiently large dilate of every
non-degenerate $k$-point configuration where $k < d$. 
Hart and Iosevich \cite{hi} were the first to study an analog of this
question in finite field geometries. Let $P_k$ and $P'_k$ be two $k$-simplices in vector space $\mathbb{}_q^d$. We say that $P_k \sim P_k^{'}$ if there exist $\tau \in
\mathbb{F}_q^d$, and $O \in SO_d (\mathbb{F}_q)$, the set of $d$-by-$d$
orthogonal matrices over $\mathbb{F}_q$, such that $P^{'}_k = O(P_k) + \tau$.
Hart and Iosevich \cite{hi} observed that, under this equivalent relation, one may
specify a simplex by the distances determined by its vertices. When $d \ge \binom{k+1}{2}$, they showed that if $E \subset \mathbb{F}_q^d$ ($d \geq \binom{k + 1}{2}$) of
cardinality $|E| \gg C q^{\frac{k d}{k + 1} + \frac{k}{2}}$ then
$E$ contains a congruent copy of every $k$-simplices with the exception
of simplices with zero distances. Using spectral graph theory, this lower bound on the set size was improved to $|E| \gg
q^{\frac{d - 1}{2} + k}$ by the second listed author
\cite{vinh-pg} for the case of $d \geq 2 k$.

If we only want to cover a positive proportion of all possible simplices, the above bounds can be further improved. Chapman et al \cite{chapman} showed that if $|E|
\gtrsim q^{\frac{d + k}{2}}$ ($d \geq k$) then the set of $k$-simplices, up to congruence, has density greater than $c$. 
Using group action approach,  Bennett et al. \cite{bennett} proved that if $E \gg q^{d - \frac{d-1}{k+1}}$ then $E$  determines a positive proportion of all $k$-simplices. In \cite{hiep}, H. Pham, T. Pham and the second listed author gave an improvement of this result in the case when $E$ is the Cartesian product of sets.

In line with the study of simplices in vector spaces over finite fields,
the second listed author \cite{vinh-ajm} also studied the distribution of simplices with respect to the dot-prouct. Note that, this problem can be viewed as the solvability of systems of bilinear equations over finite fields. 

In this paper, we study analogue results of $k$-simplices and dot-product $k$-simplices in $\mathbb{Z}_n^d$ for an arbitrary odd integer $n$. We will show that any sufficient large subset $E \subset \mathbb{Z}_n^d$, the set of $k$-simplices and the set of dot-product $k$-simplices determined by $E$, up to
congruence, have positive densities.

\section{Statements of results}

\subsection{Dot-product sets}

Let $\mathbb{Z}_n$ be the ring of residues mod $n$ where $n$ is a large odd integer. Denote $\mathbb{Z}_n^{\times}$ be the set of units in $\mathbb{Z}_n.$ The finite Euclidean space $\mathbb{Z}_n^d$ consists of column vectors $x,$ with $i^{th}$ entry $x_i \in \mathbb{Z}_n.$ For a subset $E \in \mathbb{Z}_n$, we define the \textit{dot-product set} of $E$ as follows 
\[ \Pi(E) := \left\{x \cdot y: x,y \in E\right\} \subset \mathbb{Z}_n,\]
where 
\[   x \cdot y = x_1y_1 + \dots x_dy_d \]
is the usual dot product.  
Using Fourier analysis, Covert, Iosevich, and Pakianathan \cite{cip} showed that if the set $E$ is large enough, its product set will cover all units in $\mathbb{Z}_n$. 

\begin{theorem}\label{theo1}\textsl{(Covert, Iosevich, Pakianathan, \cite{cip})}
Let $E \subset \mathbb{Z}_q^d,$ where $\ell \ge 2$ and $q=p^{\ell}$ is an odd prime power. Suppose that $|E| \gg \ell q^{\frac{(2\ell - 1)d}{2\ell} + \frac{1}{2\ell}}.$ We have
\[ \Pi(E) \supset \mathbb{Z}_q^{\times}.\] 
\end{theorem}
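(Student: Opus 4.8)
The plan is to recast the statement as a counting problem and attack it with Fourier analysis on $\mathbb{Z}_q^d$. For $t\in\mathbb{Z}_q$ set $\nu(t)=\#\{(x,y)\in E\times E:\ x\cdot y=t\}$; the conclusion $\Pi(E)\supseteq\mathbb{Z}_q^{\times}$ is equivalent to $\nu(t)>0$ for every unit $t$. Fix the standard additive character $\chi(a)=e^{2\pi i a/q}$ of $\mathbb{Z}_q$; by orthogonality of characters,
\[
\nu(t)=\frac1q\sum_{s\in\mathbb{Z}_q}\chi(-st)\sum_{x,y\in E}\chi\bigl(s\,x\cdot y\bigr)=\frac{|E|^{2}}{q}+R(t),\qquad R(t):=\frac1q\sum_{s\neq0}\chi(-st)\,W(s),
\]
where $W(s)=\sum_{x,y\in E}\chi(s\,x\cdot y)$. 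Since $|R(t)|\le q^{-1}\sum_{s\neq0}|W(s)|$ is a bound uniform in $t$, it suffices to show $q^{-1}\sum_{s\neq0}|W(s)|<|E|^{2}/q$ under the stated hypothesis, which then forces $\nu(t)=|E|^2/q+R(t)>0$ for all $t$.

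The next step is to organize the nonzero frequencies by their $p$-adic valuation: write $s=p^{j}v$ with $0\le j\le\ell-1$ and $v$ a unit mod $p^{\ell-j}$. Because $\chi(s\,x\cdot y)=e^{2\pi i v(x\cdot y)/p^{\ell-j}}$ depends only on $x$ and $y$ modulo $p^{\ell-j}$, one can push $E$ forward along the reduction map $\pi_{j}\colon\mathbb{Z}_q^d\to(\mathbb{Z}/p^{\ell-j})^d$: with $E_{j}(u):=|E\cap\pi_{j}^{-1}(u)|$ one gets $W(s)=\sum_{u,w}E_{j}(u)E_{j}(w)\,\psi_{v}(u\cdot w)$ for a nontrivial additive character $\psi_{v}$ of $\mathbb{Z}/p^{\ell-j}$ — exactly a weighted dot-product exponential sum over the smaller ring $\mathbb{Z}_{p^{\ell-j}}$. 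Writing this as $\sum_{u}E_{j}(u)\widehat{E_{j}}(vu)$ (a twisted Fourier transform on $(\mathbb{Z}/p^{\ell-j})^d$), Cauchy--Schwarz and Plancherel give $|W(s)|\ll p^{(\ell-j)d/2}\,N_{j}$, where $N_{j}:=\sum_{u}E_{j}(u)^{2}=\#\{(x,y)\in E\times E:\ x\equiv y\pmod{p^{\ell-j}}\}$ is the additive energy of the reduction of $E$; for $j=0$ this is the familiar $|W(s)|\ll q^{d/2}|E|$. Using $\#\{s:\ v_p(s)=j\}=\varphi(p^{\ell-j})\le p^{\ell-j}$ and summing over the $\ell$ strata reduces the whole problem to controlling the quantities $N_{j}$.

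The crux, and the step I expect to be the main obstacle, is the high-valuation strata (those with $j$ close to $\ell$), where $W(s)$ lives over a small quotient ring and non-primitive vectors of $E$ destroy cancellation. Here I would run a dichotomy. If $E$ is essentially equidistributed along the fibres of $\pi_{j}$, then $\widehat{E_{j}}$ is small away from the origin and the naive Fourier bound is even stronger than needed; if instead $N_{j}$ is near its maximum $p^{jd}|E|$ — which forces $E$ to be (roughly) a union of about $|E|p^{-jd}$ full fibres of $\pi_{j}$ — then $x\cdot y$ sweeps out a full residue class modulo $p^{\ell-j}$ as one perturbs within fibres, so $\Pi(E)$ already covers $\mathbb{Z}_q^{\times}$ provided the reduced set $\pi_{j}(E)\subset(\mathbb{Z}/p^{\ell-j})^d$ has dot-product set equal to $(\mathbb{Z}/p^{\ell-j})^{\times}$; since $|\pi_{j}(E)|\approx|E|p^{-jd}$, this is the theorem for the smaller modulus $p^{\ell-j}$, giving an induction on $\ell$ whose base case $\ell=1$ is the prime-field bound $|\pi_{j}(E)|\gg p^{(d+1)/2}$ of Hart--Iosevich. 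Tracking both branches of this dichotomy and plugging $N_{j}\le p^{jd}|E|$ into $q^{-1}\sum_{j}\varphi(p^{\ell-j})p^{(\ell-j)d/2}N_{j}$, the dominant contribution is the stratum $j=\ell-1$ (quotient $\mathbb{F}_p$), and balancing it against the main term $|E|^{2}/q$ produces precisely the threshold $|E|\gg\ell\,q^{\frac{(2\ell-1)d}{2\ell}+\frac{1}{2\ell}}$, the factor $\ell$ arising from the number of strata together with the borderline case $d=2$. Once $q^{-1}\sum_{s\neq0}|W(s)|<|E|^{2}/q$ is established, $\nu(t)>0$ for every $t$, in particular $\Pi(E)\supseteq\mathbb{Z}_q^{\times}$.
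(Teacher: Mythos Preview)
First, note that the paper does not prove this theorem; it is quoted from Covert--Iosevich--Pakianathan \cite{cip}. The paper's own contribution is Theorem~\ref{main.result}, whose proof in Section~3 follows the CIP template closely (Cauchy--Schwarz applied to each valuation stratum $\mu_\beta(t)$ as a whole, keeping the sum over $s$ inside the square, then extending the $x$-sum over all of $\mathbb{Z}_n^d$).

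Your proposal has a genuine quantitative gap. You bound each $|W(s)|$ individually by Cauchy--Schwarz and Plancherel, obtaining $|W(s)|\le p^{(\ell-j)d/2}N_j$, and then control $|R(t)|\le q^{-1}\sum_{s\neq0}|W(s)|$ by the triangle inequality. This makes your error estimate \emph{uniform in $t$}: nowhere do you use that $t$ is a unit. Consequently your argument, if it worked at the stated threshold, would actually prove $\Pi(E)=\mathbb{Z}_q$ for every $t$, which is the stronger conclusion of Theorem~\ref{main.result}/Corollary~2.4 and requires a larger set. Concretely, plugging $N_j\le p^{jd}|E|$ and $\varphi(p^{\ell-j})\le p^{\ell-j}$ into your sum gives
\[
q^{-1}\sum_{s\neq0}|W(s)|\ \le\ |E|\,p^{\ell d/2}\sum_{j=0}^{\ell-1}p^{j(d/2-1)},
\]
whose dominant term (at $j=\ell-1$) forces $|E|\gg \ell\,p^{(2\ell-1)d/2+1}=\ell\,q^{\frac{(2\ell-1)d}{2\ell}+\frac{1}{\ell}}$, not $\ell\,q^{\frac{(2\ell-1)d}{2\ell}+\frac{1}{2\ell}}$. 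You are off by exactly $p^{1/2}=q^{1/(2\ell)}$, which is precisely the gap between covering units (Theorem~\ref{theo1}) and covering the whole ring (Corollary~2.4).

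The CIP/paper method recovers this missing $p^{1/2}$ by \emph{not} splitting the $s$-sum before Cauchy--Schwarz: one bounds $|\mu_j(t)|^2$ directly, so after squaring the cross terms carry phases $\chi\bigl((s'-s)t\bigr)$. When $t\in\mathbb{Z}_q^\times$ these phases produce genuine cancellation in the $b$-sum (in the notation of Section~3), which is exactly what saves the factor $p^{1/2}$. Your triangle-inequality step throws this cancellation away. The dichotomy/induction you sketch is an interesting alternative idea, but it is not made precise: ``$N_j$ near its maximum'' does not literally force $E$ to be a union of \emph{full} fibres, which is what your lifting step (``$x\cdot y$ sweeps out a full residue class mod $p^{\ell-j}$'') would need, and you do not specify where the cutoff between the two branches lies or verify that the ``small $N_j$'' branch beats the naive bound. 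As written, the proposal proves the exponent of Corollary~2.4 but not the sharper exponent claimed in Theorem~\ref{theo1}.
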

In \cite{cip},  it was shown that Theorem \ref{theo1} is close to optimal in the sense that there exist a value $b = b(p)$ and a subset $E \subset \mathbb{Z}_q^d$ with cardinality $|E| = b q^{\left(\frac{2\ell - 1}{2\ell}\right)d}$ such that $\Pi(E) \cap \mathbb{Z}^{\times}_q = \emptyset.$ For such constructed set $E,$ we have $\left|\Pi(E)\right| \leq p^{\ell - 1} = \underline{o}(q).$   

In the general case of the ring of residues modulo $n$ with $n$ is a large odd integer, the second listed author obtained the following result (\cite{vinh3}).
\begin{theorem} \label{theo3}\textsl{(Vinh, \cite{vinh3})}
Let $n$ be a large odd integer. Denote $\gamma(n)$ be the smallest prime divisor of $n$ and $\tau(n)$ be the number of divisors of $n.$ 
\begin{itemize}
\item[a)] Suppose that $E \subset \mathbb{Z}_n^d$ with cardinality \[|E| \ge \frac{\sqrt{2} \tau(n)n^{d}}{\gamma(n)^{(d-1)/2}}.\]
Then, we have $\mathbb{Z}_n^{\times} \subset \Pi(E)$.
\item[b)] Suppose that $E \subset \mathbb{Z}_n^d$ with cardinality \[|E| \ge \frac{2 \sqrt{\tau(n)}n^{d+1}}{\gamma(n)^{d/2}}.\]
Then, we have $\Pi(E)  = \mathbb{Z}_n$.
\end{itemize}
\end{theorem}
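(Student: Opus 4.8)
Both parts are instances of the spectral/Fourier-analytic counting method, with the arithmetic of $n$ fed through the Chinese Remainder Theorem and Gauss-sum bounds over the prime-power components $p^{\ell}\|n$. For $\lambda\in\mathbb{Z}_n$ write $\nu(\lambda)=\#\{(x,y)\in E\times E:\ x\cdot y=\lambda\}$; the goal is $\nu(\lambda)>0$ for every unit $\lambda$ in (a), and for every $\lambda$ in (b). Fixing $\chi(a)=e^{2\pi i a/n}$ and $\widehat E(m)=\sum_x E(x)\chi(x\cdot m)$, orthogonality gives $\nu(\lambda)=|E|^{2}/n+n^{-1}\sum_{s\neq0}\chi(-s\lambda)\,W(s)$ with $W(s)=\sum_{x,y\in E}\chi(s\,x\cdot y)=\sum_{x\in E}\widehat E(sx)$, so everything reduces to estimating the $W(s)$ and summing over the nonzero frequencies.

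\emph{Part (a).} If $\lambda$ is a unit, $x\cdot y=\lambda$ forces $x$ (and $y$) to be primitive, i.e.\ $\gcd(x_{1},\dots,x_{d},n)=1$, and each primitive $x$ then has exactly $n^{d-1}$ partners. So the graph $\Gamma_\lambda$ on the primitive vectors of $\mathbb{Z}_n^{d}$ with $x\sim y\iff x\cdot y=\lambda$ is $n^{d-1}$-regular on a vertex set of size $n^{d}\prod_{p\mid n}(1-p^{-d})$, and $\nu(\lambda)$ is just its edge count inside $E$. By CRT, $\Gamma_\lambda$ is a tensor product of the analogous graphs over the $\mathbb{Z}_{p^{\ell}}^{d}$, so its nontrivial eigenvalues are products of those of the factors, each bounded by a Gauss/Kloosterman sum; summing the geometric series over the $p$-adic valuation at each prime and then over the primes dividing $n$ yields a bound of the form $\lambda_{2}(\Gamma_\lambda)\ll\tau(n)\,n^{d-1}/\gamma(n)^{(d-1)/2}$ --- the power of $\gamma(n)$ coming from the largest proper divisor $n/\gamma(n)$ of $n$, the $\tau(n)$ from the divisor bookkeeping. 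The expander mixing lemma now gives $\nu(\lambda)\ge|E'|^{2}n^{d-1}/|V'|-\lambda_{2}(\Gamma_\lambda)\,|E'|$ with $E'=E\cap V'$, $|V'|\le n^{d}$, which is positive as soon as $|E|\gg\tau(n)\,n^{d}/\gamma(n)^{(d-1)/2}$; tracking the constants through the geometric series produces the stated $\sqrt{2}$.

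\emph{Part (b), and the main obstacle.} For a target $\lambda$ that is not a unit this geometry collapses --- non-primitive vectors are no longer isolated, $\Gamma_\lambda$ is far from regular, and there is no Ramanujan-type cancellation in $\chi(-s\lambda)$ --- so one retreats to the $L^{2}$ identity $\sum_{\lambda}(\nu(\lambda)-|E|^{2}/n)^{2}=n^{-1}\sum_{s\neq0}|W(s)|^{2}$. Here the workhorse is the bound $|W(s)|\le(n/\delta)^{d/2}D_{\delta}(E)$, $\delta=\gcd(s,n)$, $D_{\delta}(E)=\#\{(x,x')\in E^{2}:\ x\equiv x'\!\!\pmod{n/\delta}\}\le\delta^{d}|E|$, proved by writing $s=\delta u$ with $u$ a unit mod $n/\delta$, reducing $x,y$ mod $n/\delta$, and applying Cauchy--Schwarz and Plancherel on $\mathbb{Z}_{n/\delta}^{d}$. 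Grouping $s$ by $\delta=\gcd(s,n)$ and summing the divisor series (again dominated by $\delta=n/\gamma(n)$) bounds the right-hand side, and a final Cauchy--Schwarz gives $\nu(\lambda)>0$ for all $\lambda$ once $|E|$ exceeds the stated $2\sqrt{\tau(n)}\,n^{d+1}/\gamma(n)^{d/2}$ --- the square root on $\tau(n)$ because the estimate enters under a square root, and the extra factor $n$ the cost of having no cancellation for general targets. The delicate point in both parts --- and essentially the only thing beyond the finite-field argument --- is the behaviour at the degenerate frequencies $\gcd(s,n)>1$, where $\widehat E$ is probed only on the proper sublattice $(\gcd(s,n)\mathbb{Z}_n)^{d}$ and the square-root cancellation in $W(s)$ deteriorates: controlling it amounts to bounding $D_{\delta}(E)$, i.e.\ preventing $E$ from concentrating along the fibers of $\mathbb{Z}_n^{d}\to\mathbb{Z}_{n/\delta}^{d}$, and the gap between the trivial $D_{\delta}(E)\le\delta^{d}|E|$ and the pigeonhole $D_{\delta}(E)\ge|E|^{2}(\delta/n)^{d}$ is precisely what fixes the affordable power of $\gamma(n)$; on the prime-power side the corresponding technical input is the Gauss-sum estimate for the eigenvalues of $\Gamma_\lambda$ over $\mathbb{Z}_{p^{\ell}}$ and their clean recombination via CRT.
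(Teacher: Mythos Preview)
This theorem is not proved in the present paper: it is quoted as background from \cite{vinh3} (``Product graphs, sum-product graphs and sum-product estimates over finite rings''), so there is no in-paper proof to compare your attempt against. What the current paper does prove is Theorem~\ref{main.result}, which \emph{improves} part~(b), and that proof proceeds by direct Fourier analysis of the counting function $\mu(t)$, Cauchy--Schwarz in the $x$-variable, and a decomposition of the error over valuation vectors $\beta=\mathrm{val}(s)$---not by the $L^2$/variance identity you propose for~(b).

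That said, your sketch is broadly in the right spirit relative to the original source. For~(a), the spectral/expander-mixing route through the product graph $\Gamma_\lambda$ and its CRT tensor decomposition is exactly the method of \cite{vinh3}; the eigenvalue bound there indeed yields the $\tau(n)n^{d-1}/\gamma(n)^{(d-1)/2}$ you state. One point you gloss over: the mixing inequality gives a lower bound in terms of $|E'|=|E\cap\{\text{primitive vectors}\}|$, not $|E|$, and you need a sentence explaining why the non-primitive part of $E$ is harmless (either because it is small compared to the threshold, or because non-primitive vertices are isolated in $\Gamma_\lambda$). For~(b), your variance approach is a legitimate alternative, but your displayed bound $|W(s)|\le (n/\delta)^{d/2}D_\delta(E)$ is not what Cauchy--Schwarz plus orthogonality actually gives: extending the $x$-sum yields $|W(s)|^2\le |E|\,n^{d}\,D_\delta(E)$, i.e.\ a square root of $D_\delta(E)$, not $D_\delta(E)$ itself. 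This does not kill the argument---plugging the correct bound into the variance sum and grouping by $\delta\mid n$ still produces the threshold $\sqrt{\tau(n)}\,n^{d+1}/\gamma(n)^{d/2}$---but as written the inequality is misstated, and ``tracking the constants'' is doing a lot of unshown work.
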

The first part of Theorem \ref{theo3} is a generalization of Theorem \ref{theo1}. In the second part, to cover the whole ring $\mathbb{Z}_n,$ we need a weaker condition on the sizes of $E.$ On the other hand, the result in the second part of Theorem \ref{theo3} is trivial when $n \ge \gamma(n)^{d/2}.$ More precisely, in the case of finite cyclic rings $\mathbb{Z}_{p^{\ell}}$, the result is non-trivial only if $\ell < d/2$.

It is of interest to extend Theorem \ref{theo1} and Theorem \ref{theo3} to cover the whole ring $\mathbb{Z}_n.$ Our first result is the following.
\begin{theorem}\label{main.result}
Let $E \subset \mathbb{Z}_n^d$ where $d >2$ and $n$ is a large odd integer. Suppose that 
\[ |E| > \dfrac{\tau(n)n^d }{\gamma(n)^{(d-2)/2}}.\]
Then, we have $\Pi(E) = \mathbb{Z}_q$.
\end{theorem}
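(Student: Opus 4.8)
The plan is to reduce the problem over $\mathbb{Z}_n$ to the prime-power components via the Chinese Remainder Theorem, and then, for each local piece $\mathbb{Z}_{p^\ell}$, to run a Fourier/exponential-sum count of the number of pairs $(x,y)\in E\times E$ with $x\cdot y=t$ for a fixed $t\in\mathbb{Z}_n$. Writing $n=\prod p_i^{\ell_i}$, the isomorphism $\mathbb{Z}_n^d\cong\prod\mathbb{Z}_{p_i^{\ell_i}}^d$ lets us factor characters and hence the counting function; the key point is that if $E$ is large then its projections $E_i$ onto each factor are large, and it suffices that for every $t$ the local counting function is positive in each coordinate. So first I would set up $N(t)=\#\{(x,y)\in E\times E:\ x\cdot y=t\}$ and expand the indicator of $\{x\cdot y=t\}$ using additive characters $\chi$ of $\mathbb{Z}_n$, giving
\[
N(t)=\frac{1}{n}\sum_{s\in\mathbb{Z}_n}\ \sum_{x,y\in E}\chi\!\left(s(x\cdot y-t)\right)
=\frac{|E|^2}{n}+\frac{1}{n}\sum_{s\neq 0}\chi(-st)\sum_{x,y\in E}\chi(s\,x\cdot y).
\]

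The main term is $|E|^2/n$, so the whole argument comes down to bounding the error term $R(t)=\frac{1}{n}\sum_{s\neq0}\chi(-st)\sum_{x,y}\chi(s\,x\cdot y)$. The standard route is Cauchy–Schwarz in $x$ (or $y$) followed by orthogonality, which produces a bound in terms of Gauss sums / Kloosterman-type sums over $\mathbb{Z}_n$; the crucial input is the bound for the number of solutions of $x\cdot y = \text{fixed}$ with $x$ ranging over all of $\mathbb{Z}_n^d$, equivalently the modulus of $\sum_{x\in\mathbb{Z}_n^d}\chi(s\,x\cdot x)$-type quadratic Gauss sums. Over $\mathbb{Z}_{p^\ell}$ these quadratic Gauss sums in $d$ variables have modulus roughly $p^{\ell d/2}$ times a correction depending on $\gcd(s,p^\ell)$; summing the geometric-type contributions over the divisors of $n$ is exactly where the factor $\tau(n)$ and the power $\gamma(n)^{-(d-2)/2}$ enter — note the exponent is $(d-2)/2$ rather than $(d-1)/2$ because to cover $t=0$ and the non-unit residues we must include the "degenerate" frequencies $s$ that are zero-divisors, and these contribute one fewer power of $p$ in the cancellation. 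I would assemble this as a lemma bounding $|R(t)|$ uniformly in $t$, of the shape $|R(t)|\le C\,\tau(n)\,n^{d-1}\gamma(n)^{-(d-2)/2}\cdot(\text{something involving }|E|)$; more precisely the Cauchy–Schwarz step gives $|R(t)|\le \frac{|E|}{n}\bigl(\sum_{s\neq0}|\widehat{E}\text{-type factor}|\bigr)^{1/2}$-style estimates, and one wants the final bound below $|E|^2/n$.

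Concretely, the inequality we need is that the error is dominated by the main term, i.e.
\[
|R(t)| < \frac{|E|^2}{n}\qquad\text{whenever}\qquad |E|>\frac{\tau(n)\,n^{d}}{\gamma(n)^{(d-2)/2}}.
\]
Then $N(t)>0$ for every $t\in\mathbb{Z}_n$, which is exactly $\Pi(E)=\mathbb{Z}_n$. The hypothesis $d>2$ is needed precisely so that $\gamma(n)^{(d-2)/2}\ge\gamma(n)^{1/2}>1$ grows, making the bound non-trivial; for $d=2$ the error estimate degenerates, consistent with the known extremal examples in Theorem \ref{theo1}.

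The main obstacle I anticipate is the careful bookkeeping of the local Gauss sums over $\mathbb{Z}_{p^\ell}$ when the frequency $s$ is a zero-divisor: one has to stratify the sum $\sum_{s}$ according to $v_p(s)$ (the $p$-adic valuation), evaluate the resulting lower-dimensional quadratic character sums, and check that the worst stratum — which turns out to be the one forcing the $\gamma(n)^{(d-2)/2}$ saving — does not destroy the bound after summing over all prime-power components and all divisors. A secondary technical point is handling the passage from $E\subset\mathbb{Z}_n^d$ to its CRT components cleanly, since $E$ need not be a product set; but because the character sum factors over the primes and we only need positivity of $N(t)$ (not an asymptotic), it is enough to control the total error $|R(t)|$ directly over $\mathbb{Z}_n$ using the multiplicativity of Gauss sums, without ever decomposing $E$ itself. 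I would therefore structure the proof as: (1) the character expansion above; (2) a lemma on quadratic Gauss sums over $\mathbb{Z}_n^d$ with the valuation stratification; (3) Cauchy–Schwarz to bound $R(t)$; (4) comparison with the main term to extract the stated threshold on $|E|$.
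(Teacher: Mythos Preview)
Your high-level outline---character expansion of $N(t)$, separate the main term $|E|^2/n$, bound the error by Cauchy--Schwarz in one variable, stratify the frequencies $s\neq 0$ by their $p$-adic valuations, and sum the $\tau(n)$ strata---is exactly the skeleton the paper uses. Where you go off course is in naming the technical input. You write that the ``crucial input'' is a quadratic Gauss sum of the form $\sum_{x}\chi(s\,x\cdot x)$, and you structure the proof around a lemma on such sums. But there is no quadratic form in this problem: $x\cdot y$ is \emph{linear} in $x$, so after Cauchy--Schwarz in $x$, expanding the square over $s,s',y,y'$, and extending the $x$-sum to all of $\mathbb{Z}_n^d$, what you actually obtain is
\[
\sum_{x\in\mathbb{Z}_n^d}\chi\bigl(x\cdot(sy-s'y')\bigr),
\]
which by orthogonality is $n^d$ times the indicator of $sy\equiv s'y'\pmod{n}$. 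No Gauss sum, no Kloosterman sum---just a counting condition. The paper then fixes the valuation vector $\beta$ of $s$ (hence of $s'$), writes $n'=n/n_\beta$, changes variables to $a=s/s'$, $b=s'\in\mathbb{Z}_{n'}^\times$, and bounds the number of $y'\in E$ with $y'\equiv ay\pmod{n'}$ trivially by $(n/n')^d$, the size of the kernel of the projection $\mathbb{Z}_n^d\to\mathbb{Z}_{n'}^d$. This yields $|\mu_\beta(t)|\le |E|\,n^{d-1}(n')^{(2-d)/2}\le |E|\,n^{d-1}\gamma(n)^{-(d-2)/2}$, and summing over the $\tau(n)$ valuation vectors gives the stated error bound. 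You seem to have pattern-matched to the \emph{distance} problem $\|x-y\|$, where quadratic Gauss sums genuinely appear; for the dot-product the mechanism is simpler and purely combinatorial.

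A minor point: the CRT reduction you open with is unnecessary and, as you correctly note at the end, problematic because $E$ need not be a product set. The paper never decomposes $E$; it works directly over $\mathbb{Z}_n$ with the valuation stratification, and you should do the same. Drop the Gauss-sum lemma from your plan and replace it with the kernel-size count above; the rest of your outline then carries through.
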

Note that, this result improves the second part of Theorem \ref{theo3} and aligns with Covert's result \cite{cov} for Erd\H{o}s distance problem. Besides, let $E \subset \mathbb{Z}_n^d$ be the set of all elements in $\mathbb{Z}_n^d$ withs all complements are divisible by $\gamma(n),$ then $|E| = n^d \gamma(n)^{-d}$ and $\Pi(E)$ contains no non-unit element of $\mathbb{Z}_n$. It shows that Theorem \ref{main.result} is asymptotically sharp as we fix $\gamma(n)$ and $d$ then let $n $ goes to infinity. Moreover, this result is non-trivial for $d \ge 3$ if $\tau(n) = \underline{o}(n^\varepsilon)$ for all $\varepsilon > 0$.

As a direct consequence, we has the following corollary in the case $n = p^{\ell}.$
\begin{corollary} 
Let $E \subset \mathbb{Z}_{q}^d,$ where $q = p^{\ell}$ and $d \ge 3.$ Suppose that $|E| > (\ell +1)q^{\frac{(2\ell - 1)d}{2\ell}+\frac{1}{\ell}}.$ Then, we have $\Pi(E) =  \mathbb{Z}_q$.
\end{corollary}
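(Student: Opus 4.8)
The plan is to obtain the corollary by simply specializing Theorem~\ref{main.result} to the modulus $n = q = p^{\ell}$ and simplifying the resulting threshold. First I would record the two number-theoretic quantities attached to $n = p^{\ell}$: its divisors are $1, p, p^2, \dots, p^{\ell}$, so $\tau(p^{\ell}) = \ell + 1$, and its smallest prime divisor is $\gamma(p^{\ell}) = p$. Since $q$ is an odd prime power, $n = p^{\ell}$ is a (large) odd integer, so Theorem~\ref{main.result} applies, and the hypothesis $d \ge 3$ is exactly the requirement $d > 2$ there.

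Next I would substitute these values into the bound of Theorem~\ref{main.result} and rewrite it as a power of $q$. Using $p = q^{1/\ell}$, hence $p^{(d-2)/2} = q^{(d-2)/(2\ell)}$, we get
\[
\frac{\tau(n)\, n^{d}}{\gamma(n)^{(d-2)/2}} \;=\; \frac{(\ell+1)\, q^{d}}{p^{(d-2)/2}} \;=\; (\ell+1)\, q^{\,d - \frac{d-2}{2\ell}}.
\]
The only computation needed is the exponent simplification
\[
d - \frac{d-2}{2\ell} \;=\; \frac{2\ell d - (d-2)}{2\ell} \;=\; \frac{(2\ell-1)d + 2}{2\ell} \;=\; \frac{(2\ell-1)d}{2\ell} + \frac{1}{\ell},
\]
so the quantity appearing in Theorem~\ref{main.result} equals $(\ell+1)\, q^{\frac{(2\ell-1)d}{2\ell}+\frac{1}{\ell}}$, which is precisely the threshold assumed in the corollary.

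Therefore the hypothesis $|E| > (\ell+1)\, q^{\frac{(2\ell-1)d}{2\ell}+\frac{1}{\ell}}$ is exactly the hypothesis $|E| > \tau(n)n^{d}/\gamma(n)^{(d-2)/2}$ of Theorem~\ref{main.result} with $n = q$, and the conclusion $\Pi(E) = \mathbb{Z}_q$ follows immediately. There is no genuine obstacle beyond the bookkeeping of this exponent: all the substantive content lies in Theorem~\ref{main.result}, which is assumed. (As a sanity check, one may note that the degenerate case $\ell = 1$ would give the threshold $2\,q^{(d-1)/2}$, consistent with the known finite-field statement, although the corollary is of interest mainly when $\ell \ge 2$.)
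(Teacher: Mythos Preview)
Your proof is correct and is exactly the paper's approach: the corollary is stated there as a ``direct consequence'' of Theorem~\ref{main.result} for $n=p^{\ell}$, and your substitution $\tau(p^{\ell})=\ell+1$, $\gamma(p^{\ell})=p=q^{1/\ell}$ together with the exponent simplification is precisely the intended bookkeeping. One small slip in your parenthetical sanity check: for $\ell=1$ the exponent is $\tfrac{d}{2}+1$, giving threshold $2\,q^{d/2+1}$, not $2\,q^{(d-1)/2}$; this does not affect the argument, but you should drop or correct that remark.
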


\subsection{Distribution of $k$ simplices}
Since a geometric justification of the notion of distance is that an orthogonal transformation preserves this distance, a $k$-simplex in a subset $ E \subset \mathbb{Z}_n^d$ can be definited recursively by setting 
\[ \mathcal{T}_{l_k} = \left\{ (\mathbf{x}_0,\dots,\mathbf{x}_{k-1},\mathbf{x}_k) \in \mathcal{T}_{l_{k-1}} \times E: \Vert \mathbf{x}_i - \mathbf{x}_k \Vert = t_{i,k}, \, i = 0,\dots,k-1\right\}, \]
in which $l_k = l_{k-1} \cup \left\{ (t_{0,k}, \dots, t_{k-1,k}), \, t_{i,j} \in \mathbb{Z}_n\right\}$ and 
\[ \mathcal{T}_{l_1} = \left\{ (\mathbf{x}_0, \mathbf{x}_1) \in E^2\,:\, \Vert \mathbf{x}_0 - \mathbf{x}_1 \Vert = t_{0,1} \right\}.\]
Denote $\mathcal{T}_k(E) := \left\{l_k: |\mathcal{T}_{l_k}|>0\right\}$ be the set of $k$-simplices determined by $E.$ We have the following the result.
\begin{theorem}\label{main.k-simplices}
Let $E \subset \mathbb{Z}_n^d.$ Suppose that 
\[ |E| \gg \dfrac{\sqrt{\tau(n)}n^{d+\frac{k-1}{2}}}{\gamma(n)^{(d-1)/2}}\]
with $k \leq d,$ then $E$ determines a positive proportion of all $k-$simplices over $\mathbb{Z}_n^d.$ In other words, 
\[ \left|\mathcal{T}_k(E)\right| \gg n^{\binom{k+1}{2}}.\] 
\end{theorem}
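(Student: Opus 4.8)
The plan is to count $k$-simplices via a moment/second-moment argument on the number of congruence-class realizations, mimicking the finite-field approach of Chapman et al.\ but carried out over $\mathbb{Z}_n$ using the Chinese Remainder Theorem and the Fourier-analytic distance estimates already available in the literature over $\mathbb{Z}_{p^\ell}$ (those underlying Theorem~\ref{theo3}). Concretely, for a fixed tuple $l_k=(t_{i,j})$ define the counting function $\nu(l_k)=|\mathcal{T}_{l_k}|$, the number of ordered $(k+1)$-point configurations in $E$ with the prescribed pairwise distances. Then $\sum_{l_k}\nu(l_k)$ is essentially $|E|^{k+1}$ (every ordered tuple contributes to exactly one $l_k$), so it suffices to show that $\sum_{l_k}\nu(l_k)^2 \ll |E|^{2(k+1)}/n^{\binom{k+1}{2}}$; Cauchy--Schwarz then forces $|\mathcal{T}_k(E)|=|\{l_k:\nu(l_k)>0\}| \gg n^{\binom{k+1}{2}}$.

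The second moment $\sum_{l_k}\nu(l_k)^2$ counts pairs of ordered $(k+1)$-tuples in $E$ that are ``congruent'' in the sense of having identical pairwise distance data. I would expand this geometrically: build the simplex one vertex at a time, so that $\sum \nu(l_k)^2$ is controlled by the number of pairs $(\mathbf{x}_0,\dots,\mathbf{x}_k)$, $(\mathbf{y}_0,\dots,\mathbf{y}_k)$ with $\|\mathbf{x}_i-\mathbf{x}_j\|=\|\mathbf{y}_i-\mathbf{y}_j\|$ for all $i<j$. The main term should be $|E|^{2(k+1)}/n^{\binom{k+1}{2}}$, coming from the heuristic that each of the $\binom{k+1}{2}$ distance equations cuts down by a factor $n$; the task is to bound the error. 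The natural tool is a recursive ``pinned distance'' estimate: conditioned on the first $j$ vertices of each tuple, the number of ways to choose the $(j{+}1)$-st pair with $j$ prescribed distances to already-chosen points is, on average over $E$, about $|E|^2/n^j$ plus an error governed by the spectral/Fourier bound. Over $\mathbb{Z}_n$ one handles this prime-power-by-prime-power via CRT: write $n=\prod p_i^{\ell_i}$, so that a distance equation over $\mathbb{Z}_n$ is a product of distance equations over the $\mathbb{Z}_{p_i^{\ell_i}}$, and the error terms multiply, producing the $\sqrt{\tau(n)}$ and $\gamma(n)^{-(d-1)/2}$ factors exactly as in Theorem~\ref{theo3}(a). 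The hypothesis $|E| \gg \sqrt{\tau(n)}\, n^{d+(k-1)/2}/\gamma(n)^{(d-1)/2}$ is precisely what makes the error term smaller than the main term in the $k$-fold iteration (the exponent $(k-1)/2$ on $n$ is the accumulated cost of adding $k$ vertices past the first).

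The key steps, in order: (1) set up $\nu(l_k)$ and reduce, via Cauchy--Schwarz, the theorem to an upper bound of the correct order on $\sum_{l_k}\nu(l_k)^2$; (2) rewrite $\sum_{l_k}\nu(l_k)^2$ as a count of congruent pairs of $(k+1)$-tuples and express it using the indicator of $E$ and additive characters of $\mathbb{Z}_n^d$ (Fourier expansion of the distance-sphere indicator), again decomposed over prime-power components by CRT; (3) isolate the diagonal/main term $|E|^{2(k+1)}/n^{\binom{k+1}{2}}$; (4) bound the off-diagonal contribution by iterating the pinned-distance estimate — equivalently, by invoking the eigenvalue bound for the ``distance graph'' over $\mathbb{Z}_{p^\ell}$ that powers Theorem~\ref{theo3}(a) — and check that the stated lower bound on $|E|$ dominates the error; (5) conclude. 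I expect the main obstacle to be step~(4): controlling the degenerate configurations (tuples whose vertices lie in a lower-dimensional affine subspace, or where some ``distance'' is a zero divisor rather than a unit) so that they do not swamp the main term. Over $\mathbb{Z}_n$, unlike a field, the relevant Gram-type matrices can fail to be invertible in subtler ways, and one must verify that the set of such bad tuples has size $\ll |E|^{2(k+1)}/n^{\binom{k+1}{2}}$ under the hypothesis on $|E|$ — this is where the factor $\tau(n)$ (number of divisors, hence number of ``scales'' of degeneracy) and the requirement that $n$ be odd (so that $2$ and hence the quadratic form's discriminant factors are units) enter, and it parallels the exceptional-simplex analysis in \cite{hi} and \cite{bennett}.
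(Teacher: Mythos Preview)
Your step~(4) has a genuine gap. The quantity $\sum_{l_k}\nu(l_k)^2$ counts pairs of ordered $(k{+}1)$-tuples in $E$ sharing all $\binom{k+1}{2}$ pairwise distances, and you propose to bound it by ``iterating the pinned-distance estimate'' one vertex at a time. But the natural iteration is circular: if you condition on congruent bases $(x_0,\dots,x_j)\sim(y_0,\dots,y_j)$ and then sum over the new vertex pair $(x_{j+1},y_{j+1})$ and its common distance vector $(t_0,\dots,t_j)$, you simply reconstitute $\sum_{l_{j+1}}\nu(l_{j+1})^2$. To break the circle you would have to decouple the two tuples while retaining the congruence constraint on their bases, and the pinned-distance/spectral bounds you cite are \emph{averaged over all base points in $E$}, not over congruent pairs of bases, so they do not supply that decoupling. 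The group-action route that makes a direct second moment work over fields (as in \cite{bennett}) is unavailable here because the orthogonal group over $\mathbb{Z}_n$ does not act transitively on tuples with a given distance pattern; you note this at the end but offer no workaround.

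The paper sidesteps the problem by never bounding $\sum_{l_k}\nu(l_k)^2$ at all. Instead it bounds the far weaker \emph{$k$-star} second moment $\mathcal{M}_k=\sum_{y^1,\dots,y^k\in E}\sum_{t_1,\dots,t_k}\nu_{y^1,\dots,y^k}^2(t_1,\dots,t_k)$, where only the $k$ distances from a single apex $x$ to the base points $y^1,\dots,y^k$ are constrained and the $y^i$ are unconstrained among themselves (Lemma~\ref{lemma.k-star}). \emph{This} quantity does yield to induction on $k$: Fourier-expand in the last coordinate $y^k$, separate the $s=0$ term (which gives $|E|\cdot\mathcal{M}_{k-1}/n$), and bound the $s\neq0$ part level-by-level in the divisor lattice of $n$. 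Cauchy--Schwarz then shows that a positive proportion of $k$-tuples $(y^1,\dots,y^k)\in E^k$ have $|\Delta_{y^1,\dots,y^k}(E)|\gg n^k$ (Theorem~\ref{k-star}). A separate combinatorial ``peeling'' argument (Lemma~\ref{lemma.k-simplices}/Corollary~\ref{lem.k-simplices}) then shows these good $k$-tuples themselves contain many good $(k{-}1)$-tuples, and so on down to $1$-tuples, accumulating a factor $n^j$ at each stage and yielding $|\mathcal{T}_k(E)|\gg n^{k}\cdot n^{k-1}\cdots n^{1}=n^{\binom{k+1}{2}}$. Your second-moment instinct is correct, but the object to apply it to is the star count, not the full simplex count, and the passage from stars to simplices is a distinct step that your proposal omits.
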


Similarly, one can define a $k$-simplex with dot-product instead of distance function. A dot-product $k$-simplex in a subset $E \subset \mathbb{Z}_n^d$ can be defined recursively by setting 
\[ \mathcal{P}_{l_k} = \left\{ (\mathbf{x}_0,\dots,\mathbf{x}_{k-1},\mathbf{x}_k) \in \mathcal{P}_{l_{k-1}} \times E: \mathbf{x}_i \cdot\mathbf{x}_k = t_{i,k}, \, i = 0,\dots,k-1\right\}, \]
in which $l_k = l_{k-1} \cup \left\{ (t_{0,k}, \dots, t_{k-1,k}), \, t_{i,j} \in \mathbb{Z}_n\right\}$ and 
\[ \mathcal{P}_{l_1} = \left\{ (\mathbf{x}_0, \mathbf{x}_1) \in E^2\,:\, \mathbf{x}_0 \cdot \mathbf{x}_1 = t_{0,1} \right\}.\]
Denote $\mathcal{P}_k(E) := \left\{l_k: |\mathcal{P}_{l_k}|>0\right\}$ be the set of dot-product $k$-simplices determined by $E.$ We have the following the result. 
\begin{theorem}\label{main.k-simplices dot product}
Let $E \subset \mathbb{Z}_n^d.$ Suppose that 
\[ |E| \gg \dfrac{\sqrt{\tau(n)}n^{d+\frac{k-1}{2}}}{\gamma(n)^{(d-1)/2}}\]
with $k \leq d$, Then, $E$ determines a positive proportion of all dot-product $k-$simplices over $\mathbb{Z}_n^d$. In other words,  
\[ \left|\mathcal{P}_k(E)\right| \gg n^{\binom{k+1}{2}}.\] 
\end{theorem}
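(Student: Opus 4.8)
The plan is to count dot-product $k$-simplices with multiplicity and then pass to the number of distinct congruence types by a second moment argument; this runs parallel to the proof of Theorem~\ref{main.k-simplices}, with the distance sphere sums replaced throughout by bilinear dot-product character sums. For a type $l_k\in\mathbb{Z}_n^{\binom{k+1}{2}}$ let $\mu(l_k)=|\mathcal{P}_{l_k}|$ be the number of ordered tuples $(\mathbf{x}_0,\dots,\mathbf{x}_k)\in E^{k+1}$ realizing it. By the recursive definition a tuple lies in $\mathcal{P}_{l_k}$ exactly when $l_k=(\mathbf{x}_i\cdot\mathbf{x}_j)_{0\le i<j\le k}$, so the $\mathcal{P}_{l_k}$ partition $E^{k+1}$, whence $\sum_{l_k}\mu(l_k)=|E|^{k+1}$; Cauchy--Schwarz then gives
\[
|\mathcal{P}_k(E)|=\#\{l_k:\mu(l_k)>0\}\ \ge\ \frac{\big(\sum_{l_k}\mu(l_k)\big)^2}{\sum_{l_k}\mu(l_k)^2}=\frac{|E|^{2(k+1)}}{\sum_{l_k}\mu(l_k)^2}.
\]
Thus it suffices to prove the second moment bound $\sum_{l_k}\mu(l_k)^2\ll |E|^{2(k+1)}n^{-\binom{k+1}{2}}$ under the stated lower bound on $|E|$.

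Write $\Theta_k(E):=\sum_{l_k}\mu(l_k)^2$; this is the number of pairs of ordered $(k+1)$-tuples $(\mathbf{x}_0,\dots,\mathbf{x}_k),(\mathbf{y}_0,\dots,\mathbf{y}_k)$ in $E^{k+1}$ with $\mathbf{x}_i\cdot\mathbf{x}_j=\mathbf{y}_i\cdot\mathbf{y}_j$ for all $0\le i<j\le k$. Splitting off the last vertex of each tuple and expanding the $k$ constraints $\mathbf{x}_i\cdot\mathbf{x}_k=\mathbf{y}_i\cdot\mathbf{y}_k$ $(i<k)$ in additive characters of $\mathbb{Z}_n$, the zero frequency isolates a main term and yields the recursion
\[
\Theta_k(E)=\frac{|E|^2}{n^{k}}\,\Theta_{k-1}(E)+\mathcal{E}_k,\qquad \Theta_0(E)=|E|^2,
\]
where $\mathcal{E}_k$ is the contribution of the nonzero frequencies $\mathbf{s}'\in\mathbb{Z}_n^{k}\setminus\{\mathbf{0}\}$. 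Iterating, the main terms multiply to exactly $|E|^{2(k+1)}n^{-\binom{k+1}{2}}$, so everything reduces to showing that each error $\mathcal{E}_j$ is dominated by the running main term. Each $\mathcal{E}_j$ is built from inner sums $\sum_{\mathbf{x}_j\in E}\chi\big(\mathbf{x}_j\cdot\sum_{i<j}s_i\mathbf{x}_i\big)$ weighted by the matching constraint on the earlier vertices; estimating these by a Cauchy--Schwarz in the remaining vertices reduces matters to the bilinear dot-product estimate over $\mathbb{Z}_n^d$.

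That estimate has the shape
\[
\Big|\sum_{x\in A}\sum_{y\in B}\chi(s\,x\cdot y)\Big|\ \ll\ \frac{n^{d/2}}{\gamma(n)^{(d-1)/2}}\sqrt{|A|\,|B|}\ +\ \big(\text{term governed by }\gcd(s,n)\big),
\]
and it is here that the two arithmetic quantities of the theorem appear: by the Chinese Remainder Theorem $\mathbb{Z}_n^d\cong\prod_{p^a\|n}\mathbb{Z}_{p^a}^d$ the sum factors over the prime powers dividing $n$; on each local factor the nondegeneracy of the bilinear form $x\cdot y$ produces square-root cancellation, the worst factor (the smallest prime) supplying the denominator $\gamma(n)^{(d-1)/2}$, while the frequencies that are zero-divisors modulo $n$ are organised over the divisor lattice of $n$ and contribute a factor $\tau(n)$ — which appears only as $\sqrt{\tau(n)}$ in the final bound on $|E|$ because the error enters $\Theta_k(E)$ with an effective square. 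The most expensive stage is $j=k$, with frequency space $\mathbb{Z}_n^{k}$; carrying the bilinear bound through it, the decisive constraint for $\mathcal{E}_k$ to be absorbed is exactly $|E|\gg \sqrt{\tau(n)}\,n^{d+(k-1)/2}\gamma(n)^{-(d-1)/2}$, which is the hypothesis. This delivers $\Theta_k(E)\ll |E|^{2(k+1)}n^{-\binom{k+1}{2}}$ and hence $|\mathcal{P}_k(E)|\gg n^{\binom{k+1}{2}}$.

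The main obstacle, and the part demanding genuine care, is the control of the errors $\mathcal{E}_j$. Over $\mathbb{Z}_n$ — unlike over a field — the relevant character sums exhibit no cancellation on a positive-density family of frequencies, namely those hitting the zero-divisors of $\mathbb{Z}_n$, for which the inner sums $\sum_{\mathbf{x}_j\in E}\chi(\mathbf{x}_j\cdot\mathbf{w})$ degenerate (the vector $\mathbf{w}$ being a zero-divisor, or zero, in $\mathbb{Z}_n^d$). One must therefore stratify the frequency space according to the type of degeneracy — which subset of constraints remains genuinely oscillatory, together with the divisor of $n$ recording the common factor — apply on each stratum the appropriate mixture of the bilinear estimate and trivial bounds, and check stratum by stratum that the total stays below the main term. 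Keeping the exponents of $\tau(n)$ and $\gamma(n)$ sharp (i.e. $\tfrac12$ and $\tfrac{d-1}{2}$, rather than weaker powers) is the delicate optimisation, and is where the technical lemmas established earlier in the paper are used. Because the dot product carries no translation term, this argument is marginally cleaner than its analogue for the distance $k$-simplices of Theorem~\ref{main.k-simplices}, and produces the same threshold since the bilinear sums $\sum\chi(s\,x\cdot y)$ satisfy the same square-root-cancellation bound, with the same $\tau(n)$ and $\gamma(n)$ dependence, as the sphere sums used there.
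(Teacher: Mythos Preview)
Your proposal takes a genuinely different route from the paper. You attempt to bound the \emph{full} second moment
\[
\Theta_k(E)=\sum_{l_k}\mu(l_k)^2=\big|\{(\mathbf{x}_0,\dots,\mathbf{x}_k;\mathbf{y}_0,\dots,\mathbf{y}_k)\in E^{2(k+1)}:\ \mathbf{x}_i\cdot\mathbf{x}_j=\mathbf{y}_i\cdot\mathbf{y}_j\ \forall\,i<j\}\big|
\]
directly, via Cauchy--Schwarz and a recursion in $k$. The paper does \emph{not} do this. It instead bounds the \emph{pinned} $k$-star second moment
\[
\mathcal{K}_k=\sum_{y^1,\dots,y^k\in E}\sum_{t_1,\dots,t_k}\mu_{y^1,\dots,y^k}(t_1,\dots,t_k)^2
=\big|\{(x,x',y^1,\dots,y^k)\in E^{k+2}:\ x\cdot y^i=x'\cdot y^i\ \forall i\}\big|
\]
(Lemma~\ref{lemma-k-dot.product}), in which the base $(y^1,\dots,y^k)$ is \emph{shared} by the two copies. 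From this it deduces that many bases support $\gg n^k$ star-types (Theorem~\ref{k-star dot product}), and then iterates downward (Lemma~\ref{lemma.k-simplices with dot-product}, Corollary~\ref{coro-k-dot-product}) to assemble full simplices one vertex at a time, exactly as for the distance case.

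The distinction matters, and there is a real gap in your sketch. In your recursion the error
\[
\mathcal{E}_k=n^{-k}\sum_{\mathbf{s}\ne 0}\ \sum_{\substack{(\mathbf{x}_0,\dots,\mathbf{x}_{k-1}),(\mathbf{y}_0,\dots,\mathbf{y}_{k-1})\\ \text{Gram-matching}}} \Big(\sum_{\mathbf{x}_k\in E}\chi\big(\mathbf{x}_k\cdot\!\sum_i s_i\mathbf{x}_i\big)\Big)\overline{\Big(\sum_{\mathbf{y}_k\in E}\chi\big(\mathbf{y}_k\cdot\!\sum_i s_i\mathbf{y}_i\big)\Big)}
\]
still carries the \emph{matching constraint} coupling the $\mathbf{x}$- and $\mathbf{y}$-tuples. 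Your claim that ``a Cauchy--Schwarz in the remaining vertices reduces matters to the bilinear dot-product estimate'' does not go through as stated: any Cauchy--Schwarz or $|ab|\le\tfrac12(|a|^2+|b|^2)$ step that decouples the inner sums leaves behind a weight $\mu(l_{k-1}(\mathbf{x}))$ counting how many $\mathbf{y}$-tuples match a given $\mathbf{x}$-tuple, and this weight is exactly the unknown object you are trying to control. The paper avoids this entirely: because in $\mathcal{K}_k$ the base points $y^1,\dots,y^k$ are common to both copies, after extending $y^k$ to all of $\mathbb{Z}_n^d$ orthogonality forces $s(x-x')=0$ and the sum collapses; no residual matching constraint appears. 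Note also that the ``bilinear estimate'' you invoke is not among the paper's lemmas; Lemma~\ref{lemma-k-dot.product} bounds $\mathcal{K}_k$, not a free bilinear sum, and it does not transfer to $\Theta_k$ without precisely the missing step.

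In short: the opening Cauchy--Schwarz and the form of the recursion are fine, but the control of the $\mathcal{E}_j$ is not established, and the paper's lemmas do not supply it. If you want to align with the paper, replace $\Theta_k$ by the pinned quantity $\mathcal{K}_k$ and run the iterative build-up of Corollary~\ref{coro-k-dot-product}.
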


\section{Dot-product sets - Proof of Theorem \ref{main.result}}

We first recall some basic results on Fourier Analysis in $\mathbb{Z}_n^d$. For $f : \mathbb{Z}_n^d \longrightarrow \mathbb{C},$ we define the Fourier transform of $f$ as 
\[ \widehat{f}(m) = n^{-d}\sum_{x \in \mathbb{Z}_n^d} f(x)\chi(- x \cdot m),\]
where $\chi(x) = \exp(2\pi ix/n).$ Since $\chi$ is a character on the additive group $\mathbb{Z}_n,$ we have the following orthogonality property. 

\begin{lemma}\label{ortho-property}
We have 
\begin{align*}
n^{-d} \sum_{x \in \mathbb{Z}^d_n} \chi(x \cdot m) = 
\begin{cases}
1 & m = (0,\dots,0) \\
0 & \text{otherwise}
\end{cases}
\end{align*}
\end{lemma}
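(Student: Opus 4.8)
The plan is to exploit that the dot product decouples the character $\chi$ into a product over the coordinates, thereby reducing the $d$-dimensional sum to a product of $d$ one-dimensional character sums that can be evaluated by a geometric series. First I would note that, since $\chi(t) = \exp(2\pi i t/n)$ satisfies $\chi(a+b) = \chi(a)\chi(b)$ and $x \cdot m = \sum_{i=1}^d x_i m_i$, we have $\chi(x \cdot m) = \prod_{i=1}^d \chi(x_i m_i)$. Because the coordinates $x_i$ range independently over $\mathbb{Z}_n$, the full sum factors completely:
\[
\sum_{x \in \mathbb{Z}_n^d} \chi(x \cdot m) = \prod_{i=1}^d \Bigl( \sum_{t \in \mathbb{Z}_n} \chi(t\, m_i) \Bigr).
\]
This reduces the whole problem to understanding the single-variable sum $S(a) := \sum_{t \in \mathbb{Z}_n} \chi(t a)$ for $a \in \mathbb{Z}_n$.

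Next I would evaluate $S(a)$ in two cases. If $a \equiv 0 \pmod n$, then $\chi(ta) = 1$ for every $t$, so $S(a) = n$. If $a \not\equiv 0 \pmod n$, then $\zeta := \chi(a) = \exp(2\pi i a/n)$ is an $n$-th root of unity with $\zeta \neq 1$ (since $a/n \notin \mathbb{Z}$). Running $t$ over the complete residue system $\{0,1,\dots,n-1\}$ gives $\chi(ta) = \zeta^t$, and the geometric series yields
\[
S(a) = \sum_{t=0}^{n-1} \zeta^t = \frac{\zeta^n - 1}{\zeta - 1} = 0,
\]
because $\zeta^n = \exp(2\pi i a) = 1$ kills the numerator while the denominator is nonzero.

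Finally I would recombine the factors. If $m = (0,\dots,0)$, every factor equals $n$, so the product is $n^d$ and $n^{-d}\sum_{x}\chi(x\cdot m) = 1$. If $m \neq (0,\dots,0)$, then at least one component $m_i \not\equiv 0 \pmod n$, and the corresponding factor $S(m_i)$ vanishes, forcing the entire product to be $0$. Dividing by $n^d$ gives the claimed dichotomy. There is no genuine obstacle here: this is simply the orthogonality of characters on the finite abelian group $(\mathbb{Z}_n^d, +)$ made explicit. The only points requiring a word of care are that $\chi$ is well defined on $\mathbb{Z}_n$ (adding $n$ to the argument multiplies by $\exp(2\pi i) = 1$), and that summing over $\mathbb{Z}_n$ coincides with summing over any complete residue system, which is precisely what legitimizes the geometric-series computation above.
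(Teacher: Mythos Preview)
Your proof is correct and is exactly the standard argument; the paper itself does not supply a proof of this lemma at all, merely recording it as the orthogonality property of the additive character $\chi$ on $\mathbb{Z}_n$. There is nothing to compare or correct.
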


The Plancherel and inversion-like identities can be derived from Lemma \ref{ortho-property}.
\begin{proposition}
Let $f$ and $g$ be complex-valued functions defined on $\mathbb{Z}_n^d$. Then, 
\begin{equation}\label{Inversion-Eq}
f(x) = \sum_{ m \in \mathbb{Z}_n^d} \chi(x \cdot m) \widehat{f}(m)
\end{equation}
\begin{equation}\label{Plancherel-Eq}
n^{-d} \sum_{x \in \mathbb{Z}_n^d} f(x)\overline{g(x)} = \sum_{m \in \mathbb{Z}_n^d} \widehat{f}(m) \overline{\widehat{g}(m)}
\end{equation}
\end{proposition}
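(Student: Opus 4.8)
The plan is to establish both identities by direct substitution of the definition of $\widehat{f}$ followed by an application of the orthogonality relation in Lemma \ref{ortho-property}. The only preliminary observation I need is that, although Lemma \ref{ortho-property} is phrased as a sum over $x \in \mathbb{Z}_n^d$ for a fixed frequency $m$, the pairing $x \cdot m$ is symmetric in its two arguments, so the identical statement holds when one sums over $m \in \mathbb{Z}_n^d$ for a fixed $x$; that is, $n^{-d}\sum_{m \in \mathbb{Z}_n^d}\chi(x \cdot m)$ equals $1$ when $x = (0,\dots,0)$ and $0$ otherwise. I will also use repeatedly that $\chi$ is a character, so that $\chi(a)\chi(b) = \chi(a+b)$ and $\overline{\chi(a)} = \chi(-a)$.

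For the inversion formula \eqref{Inversion-Eq}, I would begin with the right-hand side, insert the definition $\widehat{f}(m) = n^{-d}\sum_{y} f(y)\chi(-y \cdot m)$, and interchange the two finite sums. This produces
\[ \sum_{m \in \mathbb{Z}_n^d} \chi(x \cdot m)\widehat{f}(m) = n^{-d}\sum_{y \in \mathbb{Z}_n^d} f(y) \sum_{m \in \mathbb{Z}_n^d} \chi((x-y)\cdot m). \]
By the symmetric form of Lemma \ref{ortho-property} noted above, the inner sum over $m$ equals $n^d$ when $y = x$ and vanishes otherwise, collapsing the outer sum to the single term $f(x)$. Every interchange is legitimate because all sums are finite.

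For the Plancherel identity \eqref{Plancherel-Eq}, I would expand both Fourier transforms on the right-hand side, using $\overline{\widehat{g}(m)} = n^{-d}\sum_{y} \overline{g(y)}\,\overline{\chi(-y \cdot m)} = n^{-d}\sum_{y}\overline{g(y)}\chi(y \cdot m)$. Collecting the two factors of $n^{-d}$ and interchanging the order of summation gives
\[ \sum_{m \in \mathbb{Z}_n^d} \widehat{f}(m)\overline{\widehat{g}(m)} = n^{-2d}\sum_{x \in \mathbb{Z}_n^d}\sum_{y \in \mathbb{Z}_n^d} f(x)\overline{g(y)}\sum_{m \in \mathbb{Z}_n^d}\chi((y-x)\cdot m). \]
Applying the orthogonality relation to the sum over $m$ forces $x = y$, contributes a factor $n^d$, and reduces the triple sum to $n^{-d}\sum_{x} f(x)\overline{g(x)}$, which is precisely the left-hand side.

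Neither identity presents a genuine obstacle: the argument is a formal manipulation of finite sums. The only points requiring care are the bookkeeping of complex conjugates in the Plancherel computation — specifically the identity $\overline{\chi(-y\cdot m)} = \chi(y\cdot m)$ — and the passage from the stated form of Lemma \ref{ortho-property} to its dual form summing over frequencies. Both are immediate once the symmetry of the pairing $x \cdot m$ and the character properties of $\chi$ have been recorded, so I expect the whole proof to be short.
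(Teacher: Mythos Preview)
Your proposal is correct and is exactly the argument the paper has in mind: the paper does not give an explicit proof of this proposition but simply states that both identities can be derived from Lemma~\ref{ortho-property}, and your computation is the standard derivation it is alluding to.
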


We are now ready to give a proof of Theorem \ref{main.result}. We will follow a similar approach as in \cite{cip}.

\begin{proof}[\bf Proof of Theorem \ref{main.result}]
Without loss of generality, we suppose that $n$ has the prime decomposition $n = p_1^{\alpha_1}p_2^{\alpha_2}\dots p_k^{\alpha_k},$ where $2 < p_1 < p_2 < \dots < p_k$ and $\alpha_i > 0$ for each $i = 1,2,\dots,k.$ 
For $E \subset \mathbb{Z}_n^d,$ we define the incidence function 
\[ \mu(t) = \left\{(x,y) \in E \times E: x \cdot y = t\right\}.\]
In order to show that $\Pi(E) = \mathbb{Z}_n$, we will demonstrate that $\mu(t) > 0.$ We rewrite
\begin{align}\label{eq1}
\mu(t) &= n^{-1} \sum_{s \in \mathbb{Z}_n} \sum_{x,y \in E} \chi\left(s (x\cdot y)\right) \chi(-st) \notag\\
&= n^{-1}|E|^2 + \mathcal{M}(t), 
\end{align} 
where 
\begin{align*}
\mathcal{M}(t) &= n^{-1}\sum_{s \neq 0} \sum_{x,y \in E} \chi\left(s(x \cdot y)\right) \chi(-st). 
\end{align*}
Define 
\[ val(s) := (val_{p_1}(s),\dots,val_{p_k}(s))\]
where $val_{p_i}(x) = r$ if $p_i^r | x$ but $p_i^{r+1} \nmid x.$ 
For each $s \neq 0,$ we write $s = p_1^{\beta_1}\dots p_k^{\beta_k} \overline{s}$ where $\overline{s} \in \mathbb{Z}_{n'}^{\times}$ is uniquely determined for $n'=p_1^{\alpha_1 - \beta_1} \dots p_k^{\alpha_k - \beta_k}$ and $\beta_i \ge 0.$ Note that, $s \neq 0$ so $\beta_i < \alpha_i$ for some $i$. We will use the notation $\sum_{\beta}$ to denote the sum over all such $(\beta_1,\dots,\beta_k)$'s. 

Using the notation as above, we can write  $\mathcal{M}(t) = \sum_{\beta} \mu_{\beta}(t),$ where
\[ \mu_{\beta}(t) = q^{-1} \sum_{\substack{ s \in \mathbb{Z}_n \setminus \{0\}: val(s) = \beta}} \sum_{x,y \in E} \chi\left(s(x \cdot y)\right) \chi\left(-st\right).\]  

We will find an upper bound of $\mu_{\beta}(t)$ for each $\beta=(\beta_1,\dots,\beta_n).$ Indeed, viewing the term $\mu_{\beta}(t)$ as a sum in $x$-variable, applying Cauchy-Shwarz inequality, then extending the sum over $x \in E$ to the sum over $x \in \mathbb{Z}_n^d,$ we see that 
\begin{align*}
|\mu_{\beta}(t)|^2 &\leq |E|n^{-2} \sum_{x \in \mathbb{Z}^d_n} \sum_{y,y' \in E} \sum_{s,s' \in \mathbb{Z}^{\times}_{n'}} \chi\left(p_1^{\beta_1}\dots p_k^{\beta_k}(sy-s'y')x\right)\chi\left(p_1^{\beta_1}\dots p_k^{\beta_k}t(s'-s)\right) \\
&\leq |E|n^{d-2} \sum_{s,s' \in \mathbb{Z}^{\times}_{n'}} \sum_{\substack{y,y' \in E: \\ p_1^{\beta_1}\dots p_k^{\beta_k}(sy - s'y')=\mathbf{0}}} \chi\left(p_1^{\beta_1}\dots p_k^{\beta_k}t(s'-s)\right)\\
&= |E|n^{d-2} \sum_{a,b \in \mathbb{Z}^{\times}_{n'}} \sum_{\substack{y,y' \in E: \\ p_1^{\beta_1}\dots p_k^{\beta_k}\left(b(ay - y')\right)=\mathbf{0}}} \chi\left(p_1^{\beta_1}\dots p_k^{\beta_k}t\left(b(1-a)\right)\right).
\end{align*}

Since $b$ is a unit in $\mathbb{Z}_{n'},$ we have $ay-y'=\mathbf{0}$ in $\mathbb{Z}_{n'}^d.$ This implies that
\begin{align*}
|\mu_\beta(t)|^2 &\leq |E|n^{d-2} \sum_{a, b \in \mathbb{Z}^{\times}_{n'}} \sum_{\substack{y,y' \in \mathbb{Z}_n^d: \\ p_1^{\beta_1}\dots p_k^{\beta_k}(b(ay-y'))=\mathbf{0}}} E(y)E(y')\chi\left(p_1^{\beta_1}\dots p_k^{\beta_k}t\left(b(1-a)\right)\right) \\
&\leq |E|n^{d-2} \sum_{a, b \in \mathbb{Z}^{\times}_{n'}} \sum_{\substack{y,y' \in \mathbb{Z}_n^d: \\ p_1^{\beta_1}\dots p_k^{\beta_k}(b(ay-y'))=\mathbf{0}}} \left|E(y)E(y')\chi\left(p_1^{\beta_1}\dots p_k^{\beta_k} t\left(b(1-a)\right)\right)\right| \\
&= |E|n^{d-2} \sum_{a, b \in \mathbb{Z}^{\times}_{n'}} \sum_{y \in \mathbb{Z}_n^d}  |E(y)|\left|R(ay)\right|,
\end{align*}
where $R(\gamma) = \left\{ y' \in E: y' \equiv \gamma \, \left(\mathrm{mod}\, n'\right)\right\}.$ Since the Kernel of the canonical projection $K$ from $\mathbb{Z}_n^d$ to $\mathbb{Z}^d_{n'}$ defined by
\[ K : \quad y \mapsto y \,\,\mbox{mod}\,\,n',\] 
has size of $(n/n')^{d},$ we have
\begin{align*}
|\mu_\beta(t)|^2 &\leq |E|n^{d-2} \sum_{a, b \in \mathbb{Z}^{\times}_{n'}} \sum_{y \in \mathbb{Z}_n^d}  |E(y)|\left|R(ay)\right| \\
&= |E|n^{d-2} \sum_{a,b \in \mathbb{Z}_{n'}^{\times}} \sum_{y \in \mathbb{Z}_n^d}  \left(\frac{n}{n'}\right)^d E(y) \\
&\leq |E|^2 \frac{n^{2d-2}}{n'^{d}} \left|\mathbb{Z}^{\times}_{n'}\right|^2  \\
&\leq |E|^2 n^{2d-2} n'^{2-d}.
\end{align*}
On the other hand, we know that $n' = p_1^{\alpha_1-\beta_1} \dots p_k^{\alpha_k - \beta_k} \ge p_1 = \gamma(n)$ since $p_1 < p_2 < \dots < p_k$ and $\beta_i < \alpha_i$ for some $i.$ Hence,
\begin{align*}
|\mu_{\beta}(t)| &\leq |E| n^{d-1} p_1^{-\frac{d-2}{2}} =\dfrac{n^{d-1}|E|}{\gamma(n)^{(d-2)/2}}. 
\end{align*}
Therefore, we have 
\[|\mathcal{M}(t)| \leq \sum_{\beta} \left|\mu_{\beta}(t)\right| \leq \dfrac{\tau(n)n^{d-1}|E|}{\gamma(n)^{(d-2)/2}}.\]
It follows from \eqref{eq1} that $\mu(t) > 0$ whenever 
\[ |E| > \dfrac{\tau(n)n^d}{\gamma(n)^{(d-2)/2}}.\]
This completes the proof of Theorem \ref{main.result}.
\end{proof}

\section{Distribution of simplices - Proof of Theorem \ref{main.k-simplices}}
\subsection{Counting number of $k$-stars}

Define the $k$-star set determined by a base of $k$ points $y^1,\dots,y^k \in E$ as follows 
\[ \Delta_{y^1,y^2,\ldots,y^k}(E) = \left\{\left(\Vert x - y^1 \Vert,\ldots, \Vert x - y^k \Vert\right) \in \mathbb{Z}_q^k: x \in E \right\}.\]
The main result of this section is to count the number of $k$-stars with bases in a point set $E$. 

\begin{theorem}\label{k-star}
Let $E \subset \mathbb{Z}_n^d$ with $n \ge 3$ be an odd integer. Suppose that 
\[ |E| \gg \dfrac{\sqrt{\tau(n)}n^{d+\frac{k-1}{2}}}{\gamma(n)^{(d-1)/2}}.\]
Then, we have 
\[ \frac{1}{|E|^k} \sum_{y^1, \dots , y^k \in E} \left| \Delta_{y^1,\dots,y^k}(E) \right| \gg n^{k}.\]
\end{theorem}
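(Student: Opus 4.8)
The plan is to estimate the average number of $k$-stars by a second-moment argument, in direct analogy with the $k=1$ distance-counting result and with the finite-field approach of Chapman et al. For fixed base points $y^1,\dots,y^k \in E$, let $\nu_{y^1,\dots,y^k}(t_1,\dots,t_k)$ be the number of $x \in E$ with $\|x-y^i\| = t_i$ for all $i$. Then $\sum_t \nu_{y^1,\dots,y^k}(t) = |E|$, and by Cauchy--Schwarz
\[
\left|\Delta_{y^1,\dots,y^k}(E)\right| \geq \frac{\left(\sum_t \nu_{y^1,\dots,y^k}(t)\right)^2}{\sum_t \nu_{y^1,\dots,y^k}(t)^2} = \frac{|E|^2}{\sum_t \nu_{y^1,\dots,y^k}(t)^2}.
\]
Averaging over the base and applying Cauchy--Schwarz once more in the $y$-variables, it suffices to show that
\[
\frac{1}{|E|^k}\sum_{y^1,\dots,y^k \in E}\ \sum_{t \in \mathbb{Z}_n^k} \nu_{y^1,\dots,y^k}(t)^2 \ \ll\ \frac{|E|^{k+2}}{n^k},
\]
after which a standard convexity/averaging step converts the bound on the averaged ratio into the claimed lower bound $\frac{1}{|E|^k}\sum |\Delta_{y^1,\dots,y^k}(E)| \gg n^k$.

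The left-hand side counts, up to the normalization $|E|^{-k}$, the number of configurations $(y^1,\dots,y^k,x,x')$ with $x,x' \in E$ and $\|x-y^i\| = \|x'-y^i\|$ for every $i$. I would expand each indicator $\mathbf{1}[\|x-y^i\|=\|x'-y^i\|]$ via the additive character $\chi$ as in \eqref{eq1}, getting an exponential sum over frequencies $s_i \in \mathbb{Z}_n$. The diagonal term, where all $s_i = 0$, contributes $|E|^{k+2}/n^k$, which is exactly the main term we want. For the off-diagonal terms one splits the sum by the valuation vectors $\mathrm{val}(s_i)$ exactly as in the proof of Theorem \ref{main.result}: writing $s_i = p_1^{\beta_{i,1}}\cdots p_k^{\beta_{i,k}}\,\overline{s_i}$ with $\overline{s_i}$ a unit modulo the appropriate $n'_i$, one bounds each block by reducing the constraint modulo $n'_i$, using that the projection $\mathbb{Z}_n^d \to \mathbb{Z}_{n'_i}^d$ has uniform fibers of size $(n/n'_i)^d$, and extending the sum over $E$ to all of $\mathbb{Z}_n^d$. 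Because the quadratic form $\|\cdot\|$ is nondegenerate and $n'_i \geq \gamma(n)$, the relevant Gauss-sum/level-set estimate produces a saving of $\gamma(n)^{-1/2}$ per dimension per frequency, i.e. a factor $n^{d}\gamma(n)^{-(d-1)/2}$ (with one dimension "lost" to the translation freedom, just as the $n^{d-1}$ versus $n^d$ in Theorem \ref{main.result}); summing over the $\tau(n)$ possible valuation patterns in each coordinate and over the $k$ coordinates gives an off-diagonal contribution of size at most
\[
\tau(n)^{k/2}\,|E|^{k/2+1}\,\frac{n^{k(d-1)}}{\gamma(n)^{k(d-1)/2}}\cdot(\text{lower-order}),
\]
so the stated hypothesis $|E| \gg \sqrt{\tau(n)}\,n^{d+(k-1)/2}/\gamma(n)^{(d-1)/2}$ is precisely what forces the off-diagonal to be dominated by the diagonal.

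The main obstacle I anticipate is the bookkeeping for the mixed off-diagonal terms where some but not all of the $s_i$ vanish: these are "lower-dimensional stars" and must be shown to be dominated either by the main term or by the all-nonzero off-diagonal term under the same hypothesis on $|E|$; handling them requires iterating the valuation decomposition coordinate by coordinate and checking that each partial sum over a nonzero frequency gains the full $\gamma(n)^{-(d-1)/2}$ factor. A secondary technical point is verifying the character-sum estimate for $\sum_{s \in \mathbb{Z}_{n'}^\times}\sum_{\|v\|=0,\,v\equiv\cdot} \chi(\cdots)$ uniformly over the valuation level, which is where oddness of $n$ and nondegeneracy of the norm form are used; this should follow from the classical evaluation of Gauss sums over $\mathbb{Z}_{n'}$ combined with the Chinese Remainder Theorem, mirroring the computation already carried out in the proof of Theorem \ref{main.result}. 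Once these estimates are in place, plugging into the two Cauchy--Schwarz inequalities above yields the theorem.
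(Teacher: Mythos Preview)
Your Cauchy--Schwarz reduction to the second moment
\[
\mathcal{M}_k \;=\; \sum_{y^1,\dots,y^k\in E}\ \sum_{t\in\mathbb{Z}_n^k}\nu_{y^1,\dots,y^k}(t)^2
\]
is exactly the paper's framework (this is their Lemma~\ref{lemma.k-star}), and the target bound $\mathcal{M}_k \ll |E|^{k+2}/n^k + \tau(n)n^{2d-1}\gamma(n)^{-(d-1)}|E|^k$ is what is actually proved. So the skeleton is right.

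The gap is in how you propose to bound $\mathcal{M}_k$. The paper does \emph{not} expand all $k$ indicators simultaneously, nor does it use any Gauss sum or level-set estimate for the norm form. It proceeds by induction on $k$: expand only the $k$th constraint $\|x-y^k\|=\|x'-y^k\|$ with a single frequency $s$, use the identity
\[
\|x-y^k\|-\|x'-y^k\| \;=\; (\|x\|-\|x'\|)-2\,y^k\cdot(x-x'),
\]
which is \emph{linear} in $y^k$, rewrite the $s\neq 0$ part as a sum of nonnegative terms $|\cdots|^2$, extend $y^k\in E$ to $y^k\in\mathbb{Z}_n^d$, and then sum over $y^k$ by pure orthogonality. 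This collapses to the condition $n_\beta(x-x')=0$, and the bound $|\{x':n_\beta(x-x')=0\}|\le (n/n'_\beta)^d$ together with $n'_\beta\ge\gamma(n)$ does all the work. The $s=0$ term is $|E|/n$ times $\mathcal{M}_{k-1}$, so the induction closes with a single factor of $\tau(n)$ in the error, not $\tau(n)^k$.

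Your simultaneous expansion runs into exactly the obstacle you flag: with $k$ independent frequencies, splitting each by its valuation vector produces $\tau(n)^k$ blocks, and there is no positivity to let you extend the $y^i$ sums to $\mathbb{Z}_n^d$ jointly. The off-diagonal expression you wrote down, with $\tau(n)^{k/2}|E|^{k/2+1}n^{k(d-1)}\gamma(n)^{-k(d-1)/2}$, does not come out of any computation here and is not balanced by the stated hypothesis on $|E|$. Also, no nondegeneracy of $\|\cdot\|$ or Gauss-type estimate is ever invoked in the paper's argument---the entire saving is from orthogonality in the linear variable $y^k$. So the concrete plan for the error term needs to be replaced by the inductive one-variable-at-a-time argument.
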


For $t_1,\dots,t_k \in \mathbb{Z}_n$ and $E \subset \mathbb{Z}_n^d,$ we define the counting function
\begin{align*}
 \nu_{y^1,\dots,y^k}(t_1,\dots,t_k) &:= \left|\left\{ x \in E \,:\, \Vert x - y^i \Vert = t_i,\, \forall i = 1, \dots ,k \right\}\right|\\
&= \sum_{ \Vert x - y^1\Vert = t_1, \dots, \Vert x - y^k \Vert = t_k} E(x).
\end{align*}
The following lemma plays an significant role in proof of Theorem \ref{k-star}. 

\begin{lemma}\label{lemma.k-star} Let $E \subset \mathbb{Z}_n^d$ where $n \ge 3$ is an odd integer. Then 
\[ \mathcal{M}_{k} = \sum_{y^1,\dots,y^k \in E} \sum_{t_1,\dots,t_k \in \mathbb{Z}_n} \nu^2_{y^1,\dots,y^k}(t_1,\dots,t_k) \ll \dfrac{|E|^{k+2}}{n^k} + \frac{\tau(n)n^{2d-1}}{\gamma(n)^{d-1}}|E|^k.\]
\end{lemma}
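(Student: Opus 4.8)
The plan is to expand $\mathcal{M}_k$ using the orthogonality relation of Lemma~\ref{ortho-property} applied to each of the $k$ distance constraints, turning the squared counting function into a character sum. Writing $\nu_{y^1,\dots,y^k}(t_1,\dots,t_k)$ as $n^{-k}\sum_{s_1,\dots,s_k}\sum_{x\in E}\chi\bigl(\sum_j s_j(\|x-y^j\|-t_j)\bigr)$, squaring introduces a second point $x'\in E$ and a second set of parameters $s_1',\dots,s_k'$; summing over the $t_i\in\mathbb{Z}_n$ forces $s_i'=s_i$ for all $i$ via orthogonality. After also summing over $y^1,\dots,y^k\in E$, the sum factors across the $k$ base-point indices: each $y^j$ contributes a factor $\sum_{y^j\in E}\chi\bigl(s_j(\|x-y^j\|-\|x'-y^j\|)\bigr)$. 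Expanding $\|x-y^j\|-\|x'-y^j\| = \|x\|-\|x'\| - 2y^j\cdot(x-x')$, the $s_j$-dependence separates and one is left with
\[
\mathcal{M}_k = n^{-k}\sum_{x,x'\in E}\sum_{s_1,\dots,s_k\in\mathbb{Z}_n}\prod_{j=1}^{k}\chi\bigl(s_j(\|x\|-\|x'\|)\bigr)\sum_{y^j\in E}\chi\bigl(-2s_j\,y^j\cdot(x-x')\bigr).
\]

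Next I would isolate the contribution of the "diagonal" terms. The $k$ inner sums over $(s_j,y^j)$ are identical copies of a single bilinear-type sum $S(x,x') := \sum_{s\in\mathbb{Z}_n}\chi\bigl(s(\|x\|-\|x'\|)\bigr)\sum_{y\in E}\chi\bigl(-2s\,y\cdot(x-x')\bigr)$, so $\mathcal{M}_k = n^{-k}\sum_{x,x'\in E} S(x,x')^k$. When $x=x'$ we get $S(x,x)=n|E|$, contributing $n^{-k}\cdot|E|\cdot(n|E|)^k = n^0|E|^{k+1}$ — wait, that is $|E|^{k+1}$, which is slightly too large; the resolution is that the term $s=0$ already in the off-diagonal part must be separated first. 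The cleaner bookkeeping is: pull out $s_j=0$ for each $j$ before anything else, which gives the main term $n^{-k}|E|^k\cdot|E|^2 = |E|^{k+2}/n^k$ (the $|E|^2$ from $x,x'$ ranging freely when all $s_j=0$), and then bound the remainder where at least one $s_j\ne 0$. For that remainder I would use the same factorization and reduce to controlling, for each fixed nonzero valuation pattern of the $s_j$'s, a sum of the shape already handled in the proof of Theorem~\ref{main.result}: the $y$-sum $\sum_{y\in E}\chi(-2s\,y\cdot(x-x'))$ is a Fourier coefficient of $E$, and summing its modulus (or square, after another Cauchy–Schwarz) over $x,x'\in E$ and over $s$ with $val(s)=\beta$ produces exactly the factor $n'^{\,2-d}$ that drove the previous argument, where $n'\ge\gamma(n)$.

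The key estimate will therefore be: the total off-diagonal contribution is $\ll \tau(n)\,n^{2d-1}\gamma(n)^{-(d-1)}\,|E|^k$. To get this I would bound $|\mathcal{M}_k - |E|^{k+2}/n^k|$ by $n^{-k}\sum_{x,x'\in E}\bigl|S(x,x')\bigr|^{k}$ restricted to the event that $S$ is not the pure $s=0$ term, use $|S(x,x')|\le n|E|$ as a crude pointwise bound on $k-1$ of the factors, and on the remaining factor apply the Plancherel-type identity \eqref{Plancherel-Eq} together with the Gauss-sum/valuation decomposition exactly as in the proof of Theorem~\ref{main.result}: $\sum_{s\ne 0}\sum_{x,x'\in E}\bigl|\widehat{E}\text{-type term}\bigr| \ll \tau(n)\,n^{d-1}|E|^2/\gamma(n)^{(d-1)/2}$ per extra factor, giving after the arithmetic the claimed bound. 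The main obstacle I anticipate is the careful interleaving of the $k$ parameters $s_1,\dots,s_k$ with their valuations: one must check that using the trivial bound on $k-1$ of the factors while paying the true Fourier decay on just one factor still suffices, i.e. that the loss of $(n|E|)^{k-1}$ is compensated by the $n^{-k}$ prefactor and by the fact that the base points $y^1,\dots,y^k$ each run over all of $E$ — this is precisely why the final bound has $|E|^k$ rather than $|E|^2$ in the second term. A secondary technical point is handling the case $\|x\|-\|x'\|\ne 0$: there the $s$-sum over the character $\chi(s(\|x\|-\|x'\|))$ contributes its own cancellation, which only helps, so it can be absorbed into the $val(s)=\beta$ analysis without extra work.
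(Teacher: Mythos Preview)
Your factorisation is correct and elegant: writing $N(x,x')=|\{y\in E:\|x-y\|=\|x'-y\|\}|$ one indeed has $\mathcal{M}_k=\sum_{x,x'\in E}N(x,x')^k$ and $S(x,x')=nN(x,x')$. The gap is in the bounding step. If you carry out ``trivial on $k-1$ factors, careful on one'' literally, you get
\[
\mathcal{M}_k=\sum_{x,x'}N^k\le |E|^{k-1}\sum_{x,x'}N=|E|^{k-1}\mathcal{M}_1\ll \frac{|E|^{k+2}}{n}+\frac{\tau(n)n^{2d-1}}{\gamma(n)^{d-1}}|E|^k,
\]
which has main term $|E|^{k+2}/n$, too large by $n^{k-1}$. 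Separating the all-$s_j=0$ term first does not rescue this: the remainder is $n^{-k}\sum_{x,x'}\bigl(S^k-|E|^k\bigr)=n^{-k}\sum_{j\ge 1}\binom{k}{j}|E|^{k-j}\sum_{x,x'}R^j$ with $R=S-|E|$, and the $j=1$ term is fine, but for $j\ge 2$ you need $\sum_{x,x'}R^j$, which is essentially $\mathcal{M}_j$ again. Pointwise bounds on $R$ are useless because $R(x,x)=(n-1)|E|$ on the diagonal, and controlling the higher moments of $R$ is exactly the content of the lemma for smaller $k$. Your anticipated compensation ``$(n|E|)^{k-1}$ versus $n^{-k}$'' does not occur: the loss is genuinely $n^{k-1}$ in the main term.

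The paper proceeds by induction on $k$, and the mechanism is more delicate than one good factor plus $k-1$ trivial ones. Writing the $k$-th constraint via characters gives $\mathcal{M}_k=n^{-1}|E|\,\mathcal{M}_{k-1}+\mathcal{N}$, where $\mathcal{N}$ is the $s\ne 0$ part. The crucial observation is that $\mathcal{N}$ can be rewritten as a sum of \emph{non-negative} terms $|{\sum_{x}\cdots}|^2$, which permits extending the sum over $y^k\in E$ to all of $\mathbb{Z}_n^d$; orthogonality in $y^k$ then forces the congruence $n_\beta(x-x')=\mathbf{0}$, cutting the number of admissible $(x,x')$ to at most $(n/n'_\beta)^d|E|$, while the remaining $y^1,\dots,y^{k-1}$ contribute only $|E|^{k-1}$ (not $(n|E|)^{k-1}$). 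That positivity-then-extend step is what your direct product argument lacks, and it is what produces the correct saving of $n^{-1}$ at every inductive step rather than only once.
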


\begin{proof}
We proceed by induction on $k$. For the initial case $k = 1,$ we use the notation $\nu_y(t)$ instead of $\nu_{y^1}(t_1)$ for the counting function. We have
\[ \nu_{y}(t)^2 = \sum_{\Vert x - y \Vert = \Vert x' - y\Vert = t} E(x) E(x').\] 
Summing in $y \in E$ and $t \in \mathbb{Z}_n,$ then applying Lemma \ref{ortho-property}, we have 
\begin{align*}
 \sum_{ y \in E} \sum_{t \in \mathbb{Z}_n} \nu_y(t)^2 &= \sum_{\Vert x - y \Vert = \Vert x' - y \Vert} E(x)E(x')E(y)  \\
 &= n^{-1} \sum_{s \in \mathbb{Z}_n} \sum_{y,x,x' \in \mathbb{Z}^d_n} \chi\left( s\left( \Vert x - y \Vert - \Vert x' - y \Vert\right)\right) E(y)E(x)E(x') \\
 &= n^{-1}|E|^3 + n^{-1} \sum_{s \neq 0}\sum_{y,x,x' \in \mathbb{Z}^d_n} \chi\left( s\left( \Vert x - y \Vert - \Vert x' - y \Vert\right)\right) E(y)E(x)E(x') \\
 &= n^{-1}|E|^3 + \mathcal{R}.  
\end{align*}
Since $\Vert x - y \Vert - \Vert x' - y \Vert = \left( \Vert x \Vert - 2y \cdot x\right) - \left( \Vert x' \Vert - 2y \cdot x'\right),$ we can rewrite $\mathcal{R}$ as 
\begin{align*}
 \mathcal{R} &= n^{-1} \sum_{ s \neq 0} \sum_{x,x',y \in \mathbb{Z}_n^d} \chi\left(\Vert x \Vert - 2y \cdot x\right)\chi\left(2y \cdot x' - \Vert x' \Vert\right) E(x)E(x')E(y') \\
 &= n^{-1} \sum_{s \neq 0} \sum_{ y \in E} \left| \sum_{x \in E} \chi\left(s\left(\Vert x \Vert - 2y \cdot x \right)\right) \right|^2. 
\end{align*}
It follows that $\mathcal{R} \ge 0$ and 
\begin{align*}
\mathcal{R} &\leq n^{-1} \sum_{s \neq 0} \sum_{y \in \mathbb{Z}^d_n}\left| \sum_{x \in E} \chi\left(s\left(\Vert x \Vert - 2y \cdot x \right)\right) \right|^2 \\
&= n^{-1} \sum_{s \neq 0} \sum_{y \in \mathbb{Z}^d_n}\sum_{x,x' \in E} \chi\left(s\left(\Vert x \Vert - \Vert x' \Vert \right)\right) \chi\left(-2sy\cdot\left(x - x' \right)\right).
\end{align*}

Without loss of generality, we suppose that $n = p_1^{\alpha_1} \dots p_{\ell}^{\alpha_\ell}.$ Define
\[ val(s) := (val_{p_1}(s),\dots,val_{p_\ell}(s))\]
where $val_{p_i}(x) = r$ if $p_i^r | x$ but $p_i^{r+1} \nmid x.$ 
For each $s \neq 0,$ we write $s = p_1^{\beta_1}\dots p_{\ell}^{\beta_{\ell}} \overline{s}$ where $\overline{s} \in \mathbb{Z}_{n'}^{\times}$ is uniquely determined for $n'_{\beta}=p_1^{\alpha_1 - \beta_1} \dots p_{\ell}^{\alpha_\ell - \beta_\ell}$ and $\beta_i \ge 0.$ Since $s \neq 0$, $\beta_i < \alpha_i$ for some $i$. We will use the notation $\sum_{\beta}$ to denote the sum over all such $(\beta_1,\dots,\beta_\ell)$'s. 

Using this notation, we have  $\mathcal{R} \leq \sum_{\beta} \mathcal{R}_{\beta},$ where
\[ \mathcal{R}_{\beta} = n^{-1} \sum_{\substack{ s \in \mathbb{Z}_n \setminus \{0\}: \\ val(s) = \beta}}  \sum_{y \in \mathbb{Z}^d_n}\sum_{x,x' \in E} \chi\left(s\left(\Vert x \Vert - \Vert x' \Vert \right)\right) \chi\left(-2sy\cdot\left(x - x' \right)\right). \]  

For each $\beta = (\beta_1, \dots,\beta_{\ell}),$ denote $n'_\beta = p_1^{\alpha_1 - \beta_1} \dots p_{\ell}^{\alpha_\ell - \beta_\ell}$ and $n_\beta = p_1^{\beta_1},\dots,p_{\ell}^{\beta_\ell}.$ Now, we will bound $\mathcal{R}_\beta.$ Applying the orthogonality property (Lemma \ref{ortho-property}), we have 
\begin{align*}
\mathcal{R}_{\beta} &= n^{-1} \sum_{\overline{s} \in \mathbb{Z}_{n'_\beta}^{\times}} \sum_{y \in \mathbb{Z}_n^d} \sum_{x,x' \in E} \chi\left(p_1^{\beta_1}\dots p_{\ell}^{\beta_\ell}\overline{s}\left(\Vert x \Vert - \Vert x' \Vert \right)\right) \chi\left(-2p_1^{\beta_1}\dots p_{\ell}^{\beta_\ell}\overline{s} y \cdot \left( x - x'\right)\right)\\
 &= n^{-1} \sum_{\overline{s} \in \mathbb{Z}^{\times}_{n'_\beta}}  \sum_{y \in \mathbb{Z}^d_n}\sum_{\substack{x,x' \in E: \\ p_1^{\beta_1}\dots p_{\ell}^{\beta_\ell}(x-x') = \mathbf{0} }} \chi\left(p_1^{\beta_1}\dots p_{\ell}^{\beta_\ell}\overline{s}\left(\Vert x \Vert - \Vert x' \Vert \right)\right) \\
&\leq n^{d - 1}\sum_{\overline{s} \in \mathbb{Z}_{n'}^{\times}} \left| \left\{ x,x' \in E: p_1^{\beta_1}\dots p_{\ell}^{\beta_\ell}(x-x')=\mathbf{0}\right\}\right| \\
&< n^{d-1} n'_\beta \sum_{x' \in E} \left| \left\{ x \in E: p_1^{\beta_1}\dots p_{\ell}^{\beta_\ell}(x-x')=\mathbf{0}\right\}\right|.
\end{align*}
On the other hand, since the Kernel of the canonical projection $K$ from $\mathbb{Z}_n^d$ to $\mathbb{Z}^d_{n'_{\beta}}$ defined by
\[ K : \quad y \mapsto y \,\,\mbox{mod}\, n'_\beta,\] 
has the size of $(n/n'_\beta)^{d},$ for each $x' \in E,$ there exist $(n/n'_{\beta})^{d} = n_{\beta}^d$ solutions to the equation $n_\beta (x-x') = p_1^{\beta_1}\dots p_{\ell}^{\beta_\ell} (x - x') = 0.$ Therefore, we obtain
\begin{equation} \label{eq.solutions}
 \left| \left\{ x \in E: p_1^{\beta_1} \dots p_{\ell}^{\beta_\ell}(x-x') = \mathbf{0}\right\} \right| \leq \left(\frac{n}{n'_\beta}\right)^d.
\end{equation}
It follows that
\begin{align*}
\mathcal{R}_\beta < n^{d-1}n'_\beta \sum_{x' \in E} \left(\dfrac{n}{n'_\beta}\right)^d = \dfrac{n^{2d-1}}{n'^{d-1}_{\beta}}|E| \leq \dfrac{n^{2d-1}}{\gamma(n)^{d-1}}|E|.
\end{align*}
Putting all together, we have 
\begin{align*}
\sum_{y \in E} \sum_{t \in \mathbb{Z}_n} \nu_y(t)^2 &< n^{-1}|E|^3 + \sum_{\beta} \mathcal{R}_{\beta} \\
&\leq  n^{-1}|E|^3 +  \frac{\tau(n)n^{2d-1}}{\gamma(n)^{d-1}}|E|.
\end{align*}
This completes the proof of the initial case. 

Now, suppose that the statement holds for $k-1$
\[ \sum_{y^1,\dots,y^{k-1} \in E} \sum_{t_1,\dots,t_{k-1} \in \mathbb{Z}_n} \nu^2_{y^1,\dots,y^{k-1}}(t_1,\dots, t_{k-1}) \ll \frac{|E|^{k+1}}{n^{k-1}} + \dfrac{\tau(n) n^{2d-1}}{\gamma(n)^{d-1}}|E|^{k-1}. \] 
We will show that the statement holds for $k$. We have 
\begin{align*}
 \sum_{ y^1,\dots,y^{k-1},y^k \in E} \sum_{t_1,\dots,t_{k-1}, t_k \in \mathbb{Z}_n} \nu^2_{y^1,\dots,y^{k-1},y^k}(t_1,\dots,t_{k-1},t_k) &=
\\   
\underset{
\begin{subarray}{c}
\Vert x - y^{1}\Vert = \Vert x' - y^{1} \Vert, \dots, \Vert x - y^{k}\Vert = \Vert x' - y^{k} \Vert 
\end{subarray}}{\sum \cdots \sum}& E(y^1)\dots E(y^k) E(x) E(x').
\end{align*}
Applying the orthogonality property (Lemma \ref{ortho-property}), we obtain 
\begin{align*}
 \sum_{ y^1,\dots,y^{k-1},y^k \in E} \sum_{t_1,\dots,t_{k-1}, t_k \in \mathbb{Z}_n} \nu^2_{y^1,\dots,y^{k-1},y^k}(t_1,\dots,t_{k-1},t_k) &=\\
n^{-1} \sum_{ \substack{ s \in \mathbb{Z}_n,\\ x,x',y^1,\dots, y^k \in E}} 
\underset{
\begin{subarray}{c}
\Vert x - y^{1}\Vert = \Vert x' - y^{1} \Vert, \dots, \Vert x - y^{k-1}\Vert = \Vert x' - y^{k-1} \Vert 
\end{subarray}}{\sum \cdots \sum}  
&\chi\left( s\left(\Vert x \Vert - 2y^k \cdot x\right)\right)\chi\left(-s\left(\Vert x' \Vert - 2y^k \cdot x'\right)\right) 
\end{align*}
since 
\[\Vert x - y^k \Vert - \Vert x' - y^k \Vert = \left( \Vert x \Vert - 2y^k\cdot x\right) - \left( \Vert x' \Vert - 2y^k \cdot x'\right).\]
Separating the term $s = 0$ then applying the induction hypothesis, we have
\[ \mathcal{M}_k \ll \dfrac{|E|^{k+2}}{n^k} + \dfrac{\tau(n)n^{2d-2}}{\gamma(n)^{d-1}}|E|^k + \mathcal{N},\]
where
\begin{align*}
\mathcal{N} &= n^{-1} \sum_{ \substack{ s \neq 0,\\ x,x',y^1,\dots, y^{k-1} \in E \\ y^k \in E}} 
\underset{
\begin{subarray}{c}
\Vert x - y^{1}\Vert = \Vert x' - y^{1} \Vert, \dots, \Vert x - y^{k-1}\Vert = \Vert x' - y^{k-1} \Vert 
\end{subarray}}{\sum \cdots \sum}  
\chi\left( s\left(\Vert x \Vert - 2y^k \cdot x\right)\right)\chi\left(-s\left(\Vert x' \Vert - 2y^k \cdot x'\right)\right)  \\
&= n^{-1} \sum_{ \substack{ s \neq 0,\\ y^1,\dots, y^{k-1} \in E \\ y^k \in E}} \sum_{t_1,\dots,t_{k-1} \in \mathbb{Z}_n}\left|\sum_{x \in E}
\underset{
\begin{subarray}{c}
\Vert x - y^{1}\Vert = t_1, \dots, \Vert x - y^{k-1}\Vert = t_{k-1} 
\end{subarray}}{\sum \cdots \sum}  
 \chi\left( s\left(\Vert x \Vert - 2y^k \cdot x\right)\right)\right|^2 \\
&\leq n^{-1} \sum_{y^k \in \mathbb{Z}_n^d} \sum_{ \substack{ s \neq 0,\\ y^1,\dots, y^{k-1} \in E}} \sum_{t_1,\dots,t_{k-1} \in \mathbb{Z}_n}\left|\sum_{x \in E}
\underset{
\begin{subarray}{c}
\Vert x - y^{1}\Vert = t_1, \dots, \Vert x - y^{k-1}\Vert = t_{k-1} 
\end{subarray}}{\sum \cdots \sum}  
 \chi\left( s\left(\Vert x \Vert - 2y^k \cdot x\right)\right)\right|^2  \\
&= n^{-1} \sum_{y^k \in \mathbb{Z}_n^d} \sum_{ \substack{ s \neq 0,\\ y^1,\dots, y^{k-1} \in E}}  \underset{
\begin{subarray}{c}
\Vert x - y^{1}\Vert = \Vert x' - y^{1} \Vert, \dots, \Vert x - y^{k-1}\Vert = \Vert x' - y^{k-1} \Vert \\
x,x' \in E
\end{subarray}}{\sum \cdots \sum} \chi\left( s\left(\Vert x \Vert - 2y^k \cdot x\right)\right)\chi\left(-s\left(\Vert x' \Vert - 2y^k \cdot x'\right)\right) \\
&= n^{-1} \sum_{y^k \in \mathbb{Z}_n^d} \sum_{ \substack{ s \neq 0,\\ y^1,\dots, y^{k-1} \in E}}  \underset{
\begin{subarray}{c}
\Vert x - y^{1}\Vert = \Vert x' - y^{1} \Vert, \dots, \Vert x - y^{k-1}\Vert = \Vert x' - y^{k-1} \Vert \\
x,x' \in E
\end{subarray}}{\sum \cdots \sum} \chi\left( s\left(\Vert x \Vert - \Vert x'\Vert\right)\right)\chi\left(-2sy^k \cdot \left(x-x'\right)\right). 
\end{align*}
It follows that $\mathcal{N} \leq \sum_{\beta} \mathcal{N}_\beta,$ where 
\begin{align*}
\mathcal{N}_{\beta} = n^{-1} \sum_{y^k \in \mathbb{Z}_n^d} \sum_{ \substack{ s \neq 0:\, val(s) = \beta \\ y^1,\dots, y^{k-1} \in E}}  \underset{
\begin{subarray}{c}
\Vert x - y^{1}\Vert = \Vert x' - y^{1} \Vert, \dots, \Vert x - y^{k-1}\Vert = \Vert x' - y^{k-1} \Vert \\
x,x' \in E
\end{subarray}}{\sum \cdots \sum} 
\chi\left( s\left(\Vert x \Vert - \Vert x'\Vert\right)\right)\chi\left(-2sy^k \cdot \left(x-x'\right)\right).  
\end{align*}
Now, we will bound $\mathcal{N}_\beta.$ We proceed similarly as in the initial case. More precisely, applying the orthogonality property (Lemma \ref{ortho-property}), we have 
\begin{align*}
\mathcal{N}_\beta &= n^{-1} \sum_{y^k \in \mathbb{Z}_n^d} \sum_{ \substack{ \overline{s} \in \mathbb{Z}^{\times}_{n'_{\beta}}, \\ y^1,\dots, y^{k-1} \in E}}  \underset{
\begin{subarray}{c}
\Vert x - y^{1}\Vert = \Vert x' - y^{1} \Vert, \dots, \Vert x - y^{k-1}\Vert = \Vert x' - y^{k-1} \Vert \\
x,x' \in E
\end{subarray}}{\sum \cdots \sum} \chi\left( n_{\beta} \overline{s}\left( \Vert x \Vert - \Vert x' \Vert \right)\right)\chi\left( -2n_{\beta} \overline{s} y^k \cdot (x-x')\right)  \\
&= n^{-1} \sum_{y^k \in \mathbb{Z}_n^d} \sum_{ \substack{ \overline{s} \in \mathbb{Z}^{\times}_{n'_{\beta}}, \\ y^1,\dots, y^{k-1} \in E}}  \underset{
\begin{subarray}{c}
\Vert x - y^{1}\Vert = \Vert x' - y^{1} \Vert, \dots, \Vert x - y^{k-1}\Vert = \Vert x' - y^{k-1} \Vert \\
x,x' \in E:\, n_{\beta}(x-x') = \mathbf{0}
\end{subarray}}{\sum \cdots \sum} \chi\left( n_{\beta} \overline{s}\left( \Vert x \Vert - \Vert x' \Vert \right)\right) \\
&= n^{d-1} \sum_{ \substack{ \overline{s} \in \mathbb{Z}^{\times}_{n'_{\beta}}, \\ y^1,\dots, y^{k-1} \in E}}  \underset{
\begin{subarray}{c}
\Vert x - y^{1}\Vert = \Vert x' - y^{1} \Vert, \dots, \Vert x - y^{k-1}\Vert = \Vert x' - y^{k-1} \Vert \\
x,x' \in E:\, n_{\beta}(x-x') = \mathbf{0}
\end{subarray}}{\sum \cdots \sum} \chi\left( n_{\beta} \overline{s}\left( \Vert x \Vert - \Vert x' \Vert \right)\right).
\end{align*} 
Therefore, we obtain 
\begin{align*}
\left|\mathcal{N}_\beta \right| &\leq n^{d-1} \sum_{ \substack{ \overline{s} \in \mathbb{Z}^{\times}_{n'_{\beta}}, \\ y^1,\dots, y^{k-1} \in E}}  \underset{
\begin{subarray}{c}
\Vert x - y^{1}\Vert = \Vert x' - y^{1} \Vert, \dots, \Vert x - y^{k-1}\Vert = \Vert x' - y^{k-1} \Vert \\
x,x' \in E:\, n_{\beta}(x-x') = \mathbf{0}
\end{subarray}}{\sum \cdots \sum} 1 \\
&\leq n^{d-1} n'_\beta|E|^{k-1}  \left| \left\{ x, x' \in E: \, n_\beta (x-x') = \mathbf{0} \right\} \right|. 
\end{align*}
On the other hand, it follows from \eqref{eq.solutions} that
\[ \left| \left\{ x, x' \in E: \, n_\beta (x-x') = \mathbf{0} \right\} \right| \leq n_{\beta}^d|E| = \left( \dfrac{n}{n'_\beta}\right)^d |E|.\]
Hence, we obtain that 
\[ |\mathcal{N}_{\beta}| \leq \dfrac{n^{2d-1}}{n'^{d-1}_{\beta}}|E|^k  \leq \dfrac{n^{2d-1}}{\gamma(n)^{d-1}}|E|^k.\] 
Putting all together, we conclude that 
\begin{align*}
\mathcal{M}_k &\ll \dfrac{|E|^{k+2}}{n^k} + \frac{\tau(n)n^{2d-2}}{\gamma(n)^{d-1}}|E|^k + \mathcal{N} \\ 
&\ll \dfrac{|E|^{k+2}}{n^k} + \frac{\tau(n)n^{2d-2}}{\gamma(n)^{d-1}}|E|^k + \sum_{\beta} \left|\mathcal{N}_{\beta}\right| \\
&\ll \dfrac{|E|^{k+2}}{n^k} + \frac{\tau(n)n^{2d-2}}{\gamma(n)^{d-1}}|E|^k + \frac{\tau(n)n^{2d-1}}{\gamma(n)^{d-1}} |E|^k \\
&\ll \dfrac{|E|^{k+2}}{n^k} + \dfrac{\tau(n)n^{2d-1}}{\gamma(n)^{d-1}}|E|^k.
\end{align*}
This concludes the proof of Lemma \ref{lemma.k-star}.
\end{proof} 

We are now ready to give a proof of Theorem \ref{k-star}.
\begin{proof}[\bf Proof of Theorem \ref{k-star}]
By the Cauchy-Schwarz inequality, we have 
\begin{align*}
|E|^{2k+2} &= \left( \sum_{y^1,\dots,y^k \in E} \sum_{t_1,\dots, t_k \,\in \mathbb{Z}_n} \nu_{y^1,\dots,y^k}(t_1,\dots,t_k)\right)^2 \\
&\leq \sum_{y^1,\dots,y^k \in E} \left| \Delta_{y^1,\dots,y^k}(E)\right| \cdot \sum_{y^1,\dots,y^k \in E} \sum_{t_1,\dots,t_k \in \mathbb{Z}_n}  \nu^2_{y^1,\dots,y^k}(t_1,\dots,t_k).
\end{align*}
It follows from Lemma \ref{lemma.k-star} that 
\[|E|^{2k+2} \ll \sum_{y^1,\dots,y^k \in E} \left|\Delta_{y^1,\dots,y^k}(E)\right| \cdot \left( \dfrac{|E|^{k+2}}{n^k} + \frac{\tau(n)n^{2d-1}}{\gamma(n)^{d-1}} |E|^k\right). \] 
Therefore, we have 
\[ \frac{1}{|E|^k} \sum_{y^1,\dots,y^k} \left|\Delta_{y^1,\dots,y^k}(E)\right| \gg \frac{|E|^{k+2}}{\dfrac{|E|^{k+2}}{n^k}+\frac{\tau(n)n^{2d-1}}{\gamma(n)^{d-1}}|E|^k} \gg n^k\]
under the assumption 
\[ |E| \gg \dfrac{\sqrt{\tau(n)} n^{d+\frac{k-1}{2}}}{\gamma(n)^{(d-1)/2}}.\]
This concludes the proof of Theorem \ref{k-star}.
\end{proof}

\subsection{Distribution of $k$-simplices}
Applying Lemma \ref{lemma.k-star}, we obtain the following result. 
\begin{lemma} \label{lemma.k-simplices}
Given $E \subset \mathbb{Z}_n^d,$ let $X \subset E \times E \times \dots \times E = E^u, \, u \ge 2\,$ with $X \sim |E|^u.$ Define 
\[ X' = \left\{(y^1, \dots, y^{u-1}): (y^1,\dots,y^u) \in X \, \text{ for some } y^u \in E \right\}. \]
For each $(y^1,\dots,y^{u-1}) \in X'$, we define
\[ X(y^1,\dots,y^u) = \left\{ y^u \in E: (y^1,\dots,y^u) \in X^u \right\}.\] 
If 
\[ |E| \gg \dfrac{\sqrt{\tau(n)} n^{d+\frac{u-2}{2}}}{\gamma(n)^{(d-1)/2}}\]
 then 
\[ \frac{1}{|X'|} \sum_{(y^1,\dots,y^{u-1}) \in X'} \left|\Delta_{y^1,\dots,y^{u-1}}\left(X(y^1,\dots,y^{u-1})\right)\right| \gg n^{u-1}, \]
where 
\[ \Delta_{y^1,\dots,y^{u-1}}\left( X(y^1,\dots,y^{u-1})\right) =  \left\{ \left(\Vert y^u - y^1 \Vert, \dots, \Vert y^{u} - y^{u-1} \Vert \right) \in \left(\mathbb{Z}_n\right)^{u-1} \, : \, y^u \in X(y^1,\dots,y^{u-1}) \right\}.\]
\end{lemma}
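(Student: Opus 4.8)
The statement is a ``relative'' version of Theorem~\ref{k-star}: instead of averaging the $(u-1)$-star count over all bases $(y^1,\dots,y^{u-1}) \in E^{u-1}$, we average only over those bases that extend to a tuple in $X$, and we count stars inside the fiber $X(y^1,\dots,y^{u-1})$ rather than inside all of $E$. The plan is to mimic the Cauchy--Schwarz argument used to deduce Theorem~\ref{k-star} from Lemma~\ref{lemma.k-star}, but carried out fiberwise over $X'$, and then to control the second moment that appears by comparing it to the unrestricted second moment $\mathcal{M}_{u-1}$ bounded in Lemma~\ref{lemma.k-star} with $k = u-1$.

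\textbf{Key steps.} First I would set, for each base $\mathbf{y} = (y^1,\dots,y^{u-1}) \in X'$ and each $(t_1,\dots,t_{u-1}) \in \mathbb{Z}_n^{u-1}$,
\[
\widetilde{\nu}_{\mathbf{y}}(t_1,\dots,t_{u-1}) = \left|\left\{ y^u \in X(y^1,\dots,y^{u-1}) : \Vert y^u - y^i \Vert = t_i,\ i = 1,\dots,u-1 \right\}\right|,
\]
so that $\sum_{t_1,\dots,t_{u-1}} \widetilde{\nu}_{\mathbf{y}}(t_1,\dots,t_{u-1}) = |X(y^1,\dots,y^{u-1})|$ and $\sum_{\mathbf{y} \in X'} |X(y^1,\dots,y^{u-1})| = |X| \gg |E|^u$. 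Next, by Cauchy--Schwarz in two places (once over $(t_1,\dots,t_{u-1})$ for a fixed base, once over $\mathbf{y} \in X'$), I would get a bound of the shape
\[
|E|^{2u} \ll |X|^2 \leq \left(\sum_{\mathbf{y}\in X'} \left|\Delta_{y^1,\dots,y^{u-1}}(X(y^1,\dots,y^{u-1}))\right|\right) \cdot \left(\sum_{\mathbf{y}\in X'} \sum_{t_1,\dots,t_{u-1}} \widetilde{\nu}_{\mathbf{y}}^2(t_1,\dots,t_{u-1})\right).
\]
The crucial observation is that $\widetilde{\nu}_{\mathbf{y}} \leq \nu_{\mathbf{y}}$ pointwise since $X(y^1,\dots,y^{u-1}) \subseteq E$, and extending the outer sum from $X' \subseteq E^{u-1}$ to all of $E^{u-1}$ only increases the sum, so the second factor is at most $\mathcal{M}_{u-1}$, which by Lemma~\ref{lemma.k-star} (with $k = u-1$) is $\ll |E|^{u+1}/n^{u-1} + \tau(n)n^{2d-1}\gamma(n)^{-(d-1)}|E|^{u-1}$. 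Plugging in and using $|X'| \leq |E|^{u-1}$ on the left-hand side normalization, the size hypothesis $|E| \gg \sqrt{\tau(n)}\, n^{d+(u-2)/2}\gamma(n)^{-(d-1)/2}$ is exactly what makes the first term in $\mathcal{M}_{u-1}$ dominate the second, yielding $\frac{1}{|X'|}\sum_{\mathbf{y}\in X'}\left|\Delta_{y^1,\dots,y^{u-1}}(X(y^1,\dots,y^{u-1}))\right| \gg n^{u-1}$ after dividing through by $|X'| \sim |E|^{u-1}$ and using $|X| \gg |E|^u$.

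\textbf{Main obstacle.} The one place that needs genuine care, rather than bookkeeping, is the passage from $|X|^2$ to the product of the two sums: the naive Cauchy--Schwarz over $\mathbf{y} \in X'$ produces a factor $|X'|$ rather than the $|E|^{u-1}$ we want in the denominator. One must check that $|X'| \gg |E|^{u-1}$ as well, which follows because $|X| \gg |E|^u$ and each fiber $X(y^1,\dots,y^{u-1})$ has size at most $|E|$, forcing $|X'| \geq |X|/|E| \gg |E|^{u-1}$; together with the trivial $|X'| \leq |E|^{u-1}$ this pins down $|X'| \sim |E|^{u-1}$, so the two normalizations agree up to constants. After that, everything reduces to the inequality $\widetilde{\nu}_{\mathbf{y}} \leq \nu_{\mathbf{y}}$ and a direct invocation of Lemma~\ref{lemma.k-star}, so no new harmonic analysis is needed.
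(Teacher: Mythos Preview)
Your proposal is correct and follows essentially the same approach as the paper: define the fiberwise counting function, use Cauchy--Schwarz to get $|X|^2 \le \left(\sum_{\mathbf{y}\in X'} |\Delta_{\mathbf{y}}(X(\mathbf{y}))|\right)\cdot \mathcal{M}_{u-1}$ via the pointwise bound $\widetilde{\nu}_{\mathbf{y}} \le \nu_{\mathbf{y}}$ and the extension $X' \subseteq E^{u-1}$, and then invoke Lemma~\ref{lemma.k-star} with $k=u-1$. Your write-up is in fact slightly more careful than the paper's, which writes $|E|^2$ where $|X|^2$ is meant and simply asserts $|X'|\sim |E|^{u-1}$ without the fiber-size argument you supply.
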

\begin{proof}
For each $(t_1, \dots, t_{u-1}) \in \left(\mathbb{Z}_n\right)^{u-1},$ define the incidence function on $X(y^1,\dots,y^{u-1})$ as follows
\[ \nu_{y^1,\dots,y^{u-1}}^{X(y^1,\dots,y^{u-1})}(t_1,\dots,t_{u-1}) =\left| \left\{ y^u \in X(y^1,\dots,y^{u-1}) \, : \,\Vert y^u - y^1 \Vert = t_1, \dots, \Vert y^u - y^{u-1} \Vert = t_{u-1} \right\} \right|.    \] 
It is easy to see that 
\[ \nu_{y^1,\dots,y^{u-1}}^{X(y^1,\dots,y^{u-1})}(t_1,\dots,t_{u-1})  \leq \nu_{y^1,\dots,y^{u-1}}(t_1,\dots,t_u),\]
where 
\[ \nu_{y^1,\dots,y^{u-1}}(t_1,\dots,t_{u-1}) = \left| \left\{ y^u \in E \, : \, \Vert y^u - y^1 \Vert = t_1, \dots, \Vert y^u - y^{u-1} \Vert = t_{u-1} \right\} \right|.\]
By the Cauchy-Schwarz inequality, we have 
\begin{align*}
|E|^2 &= \left( \sum_{(y^1,\dots,y^{u-1}) \in X'} \sum_{t_1,\dots,t_{u-1} \in \mathbb{Z}_n} \nu_{y^1,\dots,y^{u-1}}^{X(y^1,\dots,y^{u-1})}(t_1,\dots,t_{u-1})\right)^2 \\ 
&\leq \left( \sum_{(y^1,\dots,y^{u-1}) \in X'} \left|\Delta_{y^1,\dots,y^{u-1}}\left( X(y^1,\dots,y^{u-1})\right)\right|\right)\left(\sum_{(y^1,\dots,y^{u-1}) \in E} \sum_{t_1,\dots,t_{u-1} \in \mathbb{Z}_n} \nu^2_{y^1,\dots,y^{u-1}}(t_1,\dots,t_u)\right).
\end{align*}
Using Lemma \ref{lemma.k-star}, we have 
\[ |E|^2 \leq \left( \sum_{(y^1,\dots,y^{u-1}) \in X'} \left|\Delta_{y^1,\dots,y^{u-1}}\left( X(y^1,\dots,y^{u-1})\right)\right|\right)\cdot \left( \dfrac{|E|^{u+1}}{n^{u-1}} + \frac{\tau(n)n^{2d-1}}{\gamma(n)^{d-1}}|E|^{u-1}\right). \]
On the other hand, since $X' \sim |E|^{u-1},$ we have 
\[ \frac{1}{|X'|}\sum_{(y^1,\dots,y^{u-1}) \in X'} \left|\Delta_{y^1,\dots,y^{u-1}}\left( X(y^1,\dots,y^{u-1})\right)\right| \gg \dfrac{|E|^{u+1}}{\frac{|E|^{u+1}}{n^{u-1}} + \frac{\tau(n)n^{2d-1}}{\gamma(n)^{d-1}}|E|^{u-1}} \gg n^{u-1}\]
under the assumption 
\[ |E| \gg \dfrac{\sqrt{\tau(n)} n^{d+\frac{u-2}{2}}}{\gamma(n)^{(d-1)/2}}.\]
This concludes the proof of Lemma \ref{lemma.k-simplices}.  
\end{proof}

As a direct consequence, we have the following corollary.
\begin{corollary}\label{lem.k-simplices}
Let $E \subset \mathbb{Z}_n^d$ and $X \subset E \times \dots \times E = E^u, \, u \ge 2,$ with $|X| \sim |E|^u.$ If 
\[|E| \gg \dfrac{\sqrt{\tau(n)} n^{d+\frac{u-2}{2}}}{\gamma(n)^{(d-1)/2}}, \]
then there exists $\mathcal{X}^{(1)} \subset X' \subset E^{u-1}$ with $|\mathcal{X}^{(1)}| \sim |X'| \sim |E|^{u-1}$ such that for every $(y^1,\dots,y^{u-1}) \in \mathcal{X}^{(1)},$ we have 
	\begin{equation*}
	 \left| \Delta_{y^1,\dots,y^{u-1}}\left( X(y^1,\dots,y^{u-1}\right)\right| \gg n^{u-1}.
	\end{equation*}
Namely, the elements in $X$ determine a positive proportion of all $(u-1)$-simplices which are based on a $(u-2)-$simplex given by any element $(y^1,\dots,y^{u-1}) \in \mathcal{X}^{(1)}.$
\end{corollary}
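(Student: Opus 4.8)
The plan is to deduce the corollary from Lemma~\ref{lemma.k-simplices} by a routine averaging (Markov-type) argument, so essentially no new idea is needed beyond bookkeeping of constants. First I would record two elementary facts. Since $X(y^1,\dots,y^{u-1}) \subseteq E$ for every $(y^1,\dots,y^{u-1}) \in X'$, we have
\[ |X| = \sum_{(y^1,\dots,y^{u-1}) \in X'} \bigl|X(y^1,\dots,y^{u-1})\bigr| \le |X'|\cdot|E|,\]
and trivially $|X'| \le |E|^{u-1}$; hence the hypothesis $|X| \sim |E|^u$ forces $|X'| \sim |E|^{u-1}$. Second, each set $\Delta_{y^1,\dots,y^{u-1}}\bigl(X(y^1,\dots,y^{u-1})\bigr)$ is a subset of $(\mathbb{Z}_n)^{u-1}$, so its cardinality never exceeds $n^{u-1}$.

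Next I would apply Lemma~\ref{lemma.k-simplices} (whose hypotheses are exactly those of the corollary): under the stated lower bound on $|E|$ there is a constant $c>0$ with
\[ \sum_{(y^1,\dots,y^{u-1}) \in X'} \bigl|\Delta_{y^1,\dots,y^{u-1}}\bigl(X(y^1,\dots,y^{u-1})\bigr)\bigr| \ge c\,n^{u-1}|X'|.\]
Set $\mathcal{X}^{(1)} = \bigl\{(y^1,\dots,y^{u-1}) \in X' : \bigl|\Delta_{y^1,\dots,y^{u-1}}(X(y^1,\dots,y^{u-1}))\bigr| \ge \tfrac{c}{2}n^{u-1}\bigr\}$. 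Splitting the sum over $\mathcal{X}^{(1)}$ and over $X'\setminus\mathcal{X}^{(1)}$, bounding each term of the first part by $n^{u-1}$ and each term of the second by $\tfrac{c}{2}n^{u-1}$, one gets $c\,n^{u-1}|X'| \le |\mathcal{X}^{(1)}|\,n^{u-1} + \tfrac{c}{2}n^{u-1}|X'|$, hence $|\mathcal{X}^{(1)}| \ge \tfrac{c}{2}|X'|$. Combined with $|\mathcal{X}^{(1)}| \le |X'| \sim |E|^{u-1}$ this yields $|\mathcal{X}^{(1)}| \sim |X'| \sim |E|^{u-1}$, and by construction $\bigl|\Delta_{y^1,\dots,y^{u-1}}(X(y^1,\dots,y^{u-1}))\bigr| \gg n^{u-1}$ for every $(y^1,\dots,y^{u-1}) \in \mathcal{X}^{(1)}$. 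The closing sentence of the statement is then just the geometric reading of this inequality: the tuple $(y^1,\dots,y^{u-1})$ fixes a $(u-2)$-simplex, and the bound says that the points of $X(y^1,\dots,y^{u-1})$ realize a positive proportion of the $n^{u-1}$ possible vectors of distances to that base, i.e. a positive proportion of all $(u-1)$-simplices built on it.

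There is no genuine obstacle here; the only point requiring care is that the implied constant in the conclusion be tracked so as to depend only on the constant furnished by Lemma~\ref{lemma.k-simplices} and on the implied constant in $|X| \sim |E|^u$, so that ``positive proportion'' is uniform in $E$, $X$ and $n$.
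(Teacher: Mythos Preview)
Your proposal is correct and matches the paper's approach: the paper states this corollary as ``a direct consequence'' of Lemma~\ref{lemma.k-simplices} without writing out a proof, and the averaging/Markov argument you give is precisely the standard way to make that deduction explicit. Your bookkeeping of the constants and the verification that $|X'|\sim |E|^{u-1}$ are exactly what is needed.
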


\begin{proof}[\bf Proof of Theorem \ref{main.k-simplices}]
Firstly, by Theorem \ref{k-star}, there exists a subset $\mathcal{X}^{(0)} \subset E\times \dots \times E = E^k$ with $|\mathcal{X}^{(0)}| \sim |E|^k$ such that for every $(y^1,\dots,y^k) \in \mathcal{X}^{(0)},$ we have 
\[ \left| \Delta_{y^1,\dots,y^k}(E)\right| = \left| \left\{ \left( \Vert y^0 - y^1 \Vert, \dots, \Vert y^0 - y^k \Vert \right) \in \left(\mathbb{Z}_n\right)^k\, : \, y^0 \in E \right\} \right| \gg n^k.\]
This implies that the set $E$ determines a positive proportion of all $k$-simplices which are based on a $(k-1)-$simplex given by any element $(y^1,\dots,y^{k}) \in \mathcal{X}^{(0)}.$

Since 
\[ |E| \gg \frac{\sqrt{\tau(n)}n^{d+\frac{k-1}{2}}}{\gamma(n)^{(d-1)/2}} \gg \frac{\sqrt{\tau(n)}n^{d+\frac{k-2}{2}}}{\gamma(n)^{(d-1)/2}}\]
and $\left|\mathcal{X}^{(0)}\right| \sim |E|^k,$ by Corollary \ref{lem.k-simplices} where $u$ is replaced by $k$, there exists a set $\mathcal{X}^{(1)} \subset \left(\mathcal{X}^{(0)}\right)' \subset E^{k-1}$ with $\left|\mathcal{X}^{(1)}\right| \sim \left|\left(\mathcal{X}^{(0)}\right)'\right| \sim |E|^{k-1}$ such that for every $(y^1,\dots,y^{k-1}) \in \mathcal{X}^{(1)},$ we have
\[ \left| \Delta_{y^1,\dots,y^{k-1}}\left(\mathcal{X}^{(0)}(y^1,\dots,y^{k-1})\right) \right| \gg n^{k-1}.\] 
This implies that the set $\mathcal{X}^{(0)}$ determines a positive proportion of all $(k-1)$-simplices which are based on a $(k-2)-$simplex given by any element $(y^1,\dots,y^{k-1}) \in \mathcal{X}^{(1)}.$

Again, applying Corollary \ref{lem.k-simplices} where $u$ is replaced by $(k-1)$, there exists a set $\mathcal{X}^{(2)} \subset \left(\mathcal{X}^{(1)}\right)' \subset E^{k-2}$ with $\left|\mathcal{X}^{(2)}\right| \sim \left|\left(\mathcal{X}^{(1)}\right)'\right| \sim |E|^{k-2}$ such that for every $(y^1,\dots,y^{k-2}) \in \mathcal{X}^{(2)},$ we have 
\[ \left| \Delta_{y^1,\dots,y^{k-2}}\left(\mathcal{X}^{(1)}(y^1,\dots,y^{k-2})\right) \right| \gg n^{k-2}.\] 
This implies that the set $\mathcal{X}^{(1)}$ determines a positive proportion of all $(k-2)$-simplices which are based on a $(k-3)-$simplex given by any element $(y^1,\dots,y^{k-2}) \in \mathcal{X}^{(2)}.$ 

Repeating the above process, there exists a sequence of sets $\mathcal{X}^{(0)},\mathcal{X}^{(1)},\dots,\mathcal{X}^{(k-2)}$ with $\left|\mathcal{X}^{(s)}\right| = |E|^{k-s}$ for all $s = 0, 1, \dots, k-2$ such that the set $\mathcal{X}^{(s)}$ determines a positive proportion of all $(k-1-s)-$simplices which are based on a $(k-2-s)-$simplex given by any element $(y^1,\dots,y^{k-1-s}) \in \mathcal{X}^{(s+1)}.$ 

Finally, let $u = 2, X = \mathcal{X}^{(k-2)}$. Applying Lemma \ref{lem.k-simplices}, we have  the set $\mathcal{X}^{(k-2)} \subset E \times E$ determines a positive proportion of all $1-$simplices. This implies that the set $\mathcal{X}^{(0)}$ determines a positive proportion of all $(k-1)$-simplices.  

On the other hand, since the set $E$ determines a positive proportion of all $k$-simplices whose bases are fixed as a $(k-1)-$simplex given by any element $(y^1,\dots,y^{k}) \in \mathcal{X}^{(0)},$ we conclude that the set $E$ determines a positive proportion of all $k-$simplices. It means that
\[ \left|\mathcal{T}_k(E)\right| \ge n^{\binom{k+1}{2}},\]
concluding the proof of Theorem \ref{main.k-simplices}. 
\end{proof}

\section{Dot-product simplices - Proof of Theorem }

\subsection{Counting dot-product stars}

Define dot-product $k$-star set determined by $k$ points $y^1,\dots,y^k \in E$ as follows 
\[ \Pi_{y^1,y^2,\ldots,y^k}(E) = \left\{\left(x \cdot y^1,\ldots, x \cdot y^k\right) \in \mathbb{Z}_q^k: x \in E \right\}.\]
The main result of this section is to count the number of dot-product $k$-stars with bases in a point set $E$. 
\begin{theorem}\label{k-star dot product}
Let $E \subset \mathbb{Z}_n^d$ with $n \ge 3$ be an odd integer. Suppose that 
\[ |E| \gg \dfrac{\sqrt{\tau(n)}n^{d+\frac{k-1}{2}}}{\gamma(n)^{(d-1)/2}}.\]
Then, we have 
\[ \frac{1}{|E|^k} \sum_{y^1, \dots , y^k \in E} \left| \Pi_{y^1,\dots,y^k}(E) \right| \gg n^{k}.\]
\end{theorem}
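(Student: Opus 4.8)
The plan is to mirror the argument of Section 4, replacing the distance identity $\Vert x - y^k\Vert - \Vert x' - y^k\Vert = (\Vert x\Vert - 2y^k\cdot x) - (\Vert x'\Vert - 2y^k\cdot x')$ by the even simpler $x\cdot y^k - x'\cdot y^k = (x-x')\cdot y^k$. Because the right-hand side carries no pure norm term, after summing out the variable $y^k$ no residual character survives, so the dot-product case is in fact slightly cleaner than Lemma \ref{lemma.k-star}. Concretely, I would first establish the dot-product analogue of Lemma \ref{lemma.k-star} for the second moment of the counting function, and then conclude by Cauchy--Schwarz exactly as in the proof of Theorem \ref{k-star}.

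\textbf{Step 1 (second moment bound).} For $t_1,\dots,t_k\in\mathbb{Z}_n$ set $\mu_{y^1,\dots,y^k}(t_1,\dots,t_k) := |\{x\in E : x\cdot y^i = t_i,\ i=1,\dots,k\}|$ and $\mathcal{M}_k := \sum_{y^1,\dots,y^k\in E}\sum_{t_1,\dots,t_k\in\mathbb{Z}_n}\mu^2_{y^1,\dots,y^k}(t_1,\dots,t_k)$. I would prove by induction on $k$ that $\mathcal{M}_k \ll \frac{|E|^{k+2}}{n^k} + \frac{\tau(n)n^{2d-1}}{\gamma(n)^{d-1}}|E|^k$. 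For $k=1$, expand $\sum_{y\in E}\sum_{t}\mu_y(t)^2 = \sum_{x,x',y\in E}\mathbf{1}_{(x-x')\cdot y = 0}$, insert the character via Lemma \ref{ortho-property}, peel off $s=0$ to produce the main term $n^{-1}|E|^3$, and for $s\neq 0$ note that $\sum_{x,x',y\in E}\chi(s(x-x')\cdot y) = \sum_{y\in E}\big|\sum_{x\in E}\chi(s x\cdot y)\big|^2\ge 0$, so the sum over $y\in E$ may be enlarged to $y\in\mathbb{Z}_n^d$. Decomposing $s$ according to $val(s)=\beta$ and writing $s = n_\beta\overline{s}$ with $\overline{s}\in\mathbb{Z}_{n'_\beta}^\times$, the $y$-sum over $\mathbb{Z}_n^d$ of $\chi(n_\beta\overline{s}(x-x')\cdot y)$ equals $n^d$ when $x\equiv x'\ (\mathrm{mod}\ n'_\beta)$ and vanishes otherwise; combining $|\mathbb{Z}_{n'_\beta}^\times|\le n'_\beta$ with the kernel bound \eqref{eq.solutions} bounds each $\beta$-piece by $\frac{n^{2d-1}}{\gamma(n)^{d-1}}|E|$, and summing over the at most $\tau(n)$ admissible $\beta$ finishes the base case.

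\textbf{Step 2 (induction).} Writing $\mathcal{M}_k = n^{-1}\sum_{s\in\mathbb{Z}_n}\sum\limits_{\substack{x,x',y^1,\dots,y^k\in E\\ x\cdot y^i = x'\cdot y^i,\ i\le k-1}}\chi(s(x-x')\cdot y^k)$ and isolating $s=0$ gives $n^{-1}|E|\,\mathcal{M}_{k-1}\ll \frac{|E|^{k+2}}{n^k}+\frac{\tau(n)n^{2d-2}}{\gamma(n)^{d-1}}|E|^k$ by the inductive hypothesis. For the $s\neq 0$ part, grouping the $x,x'$ sums by the common value $(x\cdot y^1,\dots,x\cdot y^{k-1})$ exhibits a nonnegative square, so one may again extend $y^k$ to $\mathbb{Z}_n^d$; decomposing by $val(s)=\beta$, the $y^k$-summation forces $x\equiv x'\ (\mathrm{mod}\ n'_\beta)$, and bounding the $y^1,\dots,y^{k-1}$-sums trivially by $|E|^{k-1}$ and the number of pairs with $x\equiv x'\ (\mathrm{mod}\ n'_\beta)$ by $|E|(n/n'_\beta)^d$ (again via \eqref{eq.solutions}) gives each $\beta$-piece $\le \frac{n^{2d-1}}{\gamma(n)^{d-1}}|E|^k$, hence a total of $\le \frac{\tau(n)n^{2d-1}}{\gamma(n)^{d-1}}|E|^k$. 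Adding the two contributions and absorbing the lower-order $n^{2d-2}$ term closes the induction.

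\textbf{Step 3 (Cauchy--Schwarz).} Exactly as in the proof of Theorem \ref{k-star}, $|E|^{2k+2} = \big(\sum_{y^1,\dots,y^k\in E}\sum_{t_1,\dots,t_k}\mu_{y^1,\dots,y^k}(t_1,\dots,t_k)\big)^2 \le \big(\sum_{y^1,\dots,y^k\in E}|\Pi_{y^1,\dots,y^k}(E)|\big)\cdot\mathcal{M}_k$, whence $\frac{1}{|E|^k}\sum_{y^1,\dots,y^k\in E}|\Pi_{y^1,\dots,y^k}(E)| \gg \frac{|E|^{k+2}}{\frac{|E|^{k+2}}{n^k}+\frac{\tau(n)n^{2d-1}}{\gamma(n)^{d-1}}|E|^k}\gg n^k$ as soon as the first term in the denominator dominates, i.e. $|E|^2 \gg \frac{\tau(n)n^{2d-1+k}}{\gamma(n)^{d-1}}$, which is precisely the hypothesis $|E|\gg \frac{\sqrt{\tau(n)}\,n^{d+(k-1)/2}}{\gamma(n)^{(d-1)/2}}$. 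The only delicate part is the bookkeeping for the valuation decomposition and the size $(n/n'_\beta)^d$ of the kernel of $\mathbb{Z}_n^d\to\mathbb{Z}_{n'_\beta}^d$, together with checking that each error term is manifestly nonnegative so that enlarging the relevant $E$-sums to $\mathbb{Z}_n^d$ is legitimate; no genuinely new idea beyond the distance case is required, and in fact the absence of the $\Vert x\Vert$ terms makes the dot-product estimate marginally simpler.
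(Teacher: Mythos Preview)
Your proposal is correct, and the final Cauchy--Schwarz step (your Step 3) is word-for-word the paper's proof of Theorem \ref{k-star dot product}. The difference lies in how the second-moment bound (the paper's Lemma \ref{lemma-k-dot.product}) is obtained.

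You deliberately mirror the distance argument of Lemma \ref{lemma.k-star}: expand $\mu^2$ as a sum over $x,x'$, apply orthogonality to the single constraint $x\cdot y^k = x'\cdot y^k$, observe that the $s\neq 0$ part is a sum of squares $\sum_{y^k}\bigl|\sum_x \chi(sx\cdot y^k)\bigr|^2$, extend $y^k$ to $\mathbb{Z}_n^d$, and then run the valuation decomposition exactly as in Section 4. The paper instead exploits the linearity of the dot product to compute the Fourier transform of $\mu$ in closed form, $\widehat{\mu}_{y^1,\dots,y^k}(s_1,\dots,s_k) = n^{d-k}\widehat{E}(s_1y^1+\cdots+s_ky^k)$, and then bounds $\mathcal{K}_k$ via Plancherel, separating the case $s_k=0$ and decomposing $s_k\neq 0$ by valuation. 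Both routes produce the identical estimate $\mathcal{K}_k \ll |E|^{k+2}/n^k + \tau(n)n^{2d-1}\gamma(n)^{-(d-1)}|E|^k$. Your approach has the virtue of being a straight transcription of Lemma \ref{lemma.k-star} (indeed slightly simpler, as you note, because the $\Vert x\Vert$ terms are absent), so nothing new has to be checked; the paper's approach makes visible the pleasant identity $\widehat{\mu} = n^{d-k}\widehat{E}(\sum s_i y^i)$, which is specific to the bilinear setting and not available in the distance case.
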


For $t_1,\dots,t_k \in \mathbb{Z}_n$ and $E \subset \mathbb{Z}_n^d,$ we define the counting function
\begin{align*}
 \mu_{y^1,\dots,y^k}(t_1,\dots,t_k) &:= \left|\left\{ x \in E \,:\, x \cdot y^i = t_i,\, \forall i = 1, \dots ,k \right\}\right|\\
&= \sum_{x \in E} \prod_{i=1}^k \left(n^{-1} \sum_{s \in \mathbb{Z}_n} \chi\left(s(t_i - x \cdot y^i)\right)\right).
\end{align*}
The following lemma plays an significant role in the proof of Theorem \ref{k-star dot product}.
\begin{lemma}\label{lemma-k-dot.product} Let $E \subset \mathbb{Z}_n^d$ with odd integer $n \ge 3.$ Then 
\[ \mathcal{K}_{k} = \sum_{y^1,\dots,y^k \in E} \sum_{t_1,\dots,t_k \in \mathbb{Z}_n} \mu^2_{y^1,\dots,y^k}(t_1,\dots,t_k) \ll \dfrac{|E|^{k+2}}{n^k} + \frac{\tau(n)n^{2d-1}}{\gamma(n)^{d-1}}|E|^k.\]
\end{lemma}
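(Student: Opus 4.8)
The plan is to mirror the inductive argument used for Lemma \ref{lemma.k-star}, replacing the distance relation $\Vert x - y^i\Vert = t_i$ by the dot-product relation $x \cdot y^i = t_i$ throughout, and exploiting the fact that the key algebraic manipulation is actually \emph{simpler} in the dot-product case. Concretely, expanding the square $\mu^2_{y^1,\dots,y^k}$ and summing over $y^1,\dots,y^k \in E$ and $t_1,\dots,t_k \in \mathbb{Z}_n$, orthogonality (Lemma \ref{ortho-property}) collapses the $t_i$-sums and leaves
\[
\mathcal{K}_k = \underset{\substack{x\cdot y^i = x'\cdot y^i,\ i=1,\dots,k}}{\sum\cdots\sum} E(y^1)\cdots E(y^k)E(x)E(x').
\]
The induction is on $k$. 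For $k=1$, after isolating the $s=0$ term which produces the main term $n^{-1}|E|^3$, one writes the remaining sum $\mathcal{R}$ as $n^{-1}\sum_{s\ne 0}\sum_{y\in E}\bigl|\sum_{x\in E}\chi(s\,y\cdot x)\bigr|^2 \ge 0$, extends $y$ to all of $\mathbb{Z}_n^d$, and then groups by $\mathrm{val}(s)=\beta$ exactly as in Lemma \ref{lemma.k-star}.

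First I would set up the $\mathrm{val}(s)=\beta$ decomposition: write $s = n_\beta \overline{s}$ with $\overline{s}\in\mathbb{Z}_{n'_\beta}^{\times}$, where $n_\beta = p_1^{\beta_1}\cdots p_\ell^{\beta_\ell}$ and $n'_\beta = n/n_\beta$. Inside each block, orthogonality in the $y$-variable (summing over all of $\mathbb{Z}_n^d$) forces $n_\beta(x - x') = \mathbf{0}$, contributing a factor $n^{d-1}$ from the $y$-sum and collapsing the character sum. The number of $x'$ with $n_\beta(x-x')=\mathbf{0}$ for fixed $x$ is $n_\beta^d = (n/n'_\beta)^d$, which is precisely inequality \eqref{eq.solutions} from the proof of Lemma \ref{lemma.k-star} — and here I note that this counting step uses nothing about the distance function, so it transfers verbatim. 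Summing $|\mathbb{Z}_{n'_\beta}^{\times}| \le n'_\beta$ values of $\overline{s}$ and using $n'_\beta \ge \gamma(n)$ gives $\mathcal{R}_\beta \ll n^{2d-1}\gamma(n)^{-(d-1)}|E|$ per block, and summing over the at most $\tau(n)$ values of $\beta$ yields the claimed bound for $k=1$.

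For the inductive step, assuming the bound for $k-1$, I would split off the $y^k$-variable: since $x\cdot y^k = x'\cdot y^k$ is equivalent to $(x-x')\cdot y^k = 0$, introducing the dual parameter $s$ via orthogonality writes $\mathcal{K}_k$ as $n^{-1}|E|$ times the $s=0$ contribution (which, by the induction hypothesis applied after one more orthogonality in the remaining $y^k$-free variables, is $\ll |E|^{k+2}/n^k + \tau(n)n^{2d-2}\gamma(n)^{-(d-1)}|E|^k$) plus an error term $\mathcal{N}$. For $\mathcal{N}$ I would rewrite it, exactly as in Lemma \ref{lemma.k-star}, as a nonnegative sum of squares over $x\in E$ subject to the $k-1$ constraints $x\cdot y^i = t_i$, extend $y^k$ to all of $\mathbb{Z}_n^d$, and then decompose by $\mathrm{val}(s)=\beta$. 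Inside each block orthogonality in $y^k$ forces $n_\beta(x-x')=\mathbf{0}$, giving $n^{d-1}$ from the $y^k$-sum, a factor $|E|^{k-1}$ from the free $y^1,\dots,y^{k-1}\in E$, and $n_\beta^d|E| = (n/n'_\beta)^d|E|$ pairs $(x,x')$ by \eqref{eq.solutions}; together with $|\mathbb{Z}_{n'_\beta}^{\times}|\le n'_\beta$ and $n'_\beta\ge\gamma(n)$ this gives $|\mathcal{N}_\beta| \le n^{2d-1}\gamma(n)^{-(d-1)}|E|^k$. Summing over $\beta$ and absorbing the lower-order $n^{2d-2}$ term into the $n^{2d-1}$ term completes the induction.

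The main obstacle — and the only place where the dot-product case genuinely differs from the distance case — is the algebraic identity that lets us linearize the constraint. In the distance proof one uses $\Vert x - y\Vert - \Vert x' - y\Vert = (\Vert x\Vert - 2y\cdot x) - (\Vert x'\Vert - 2y\cdot x')$ to separate the $y$-dependence. Here the analogous decomposition is trivial: $x\cdot y^k - x'\cdot y^k = (x-x')\cdot y^k$, which is already linear in $y^k$, so orthogonality applies directly without needing the factor of $2$ (and hence without needing $n$ odd at this step, though oddness is inherited from the hypotheses anyway). Once this observation is in place, every remaining estimate — the Cauchy–Schwarz to produce sums of squares, the extension of a variable to the full space, the $\mathrm{val}(s)=\beta$ bookkeeping, and the kernel-size count \eqref{eq.solutions} — is identical to the corresponding step in the proof of Lemma \ref{lemma.k-star}, so the bulk of the write-up is a routine transcription.
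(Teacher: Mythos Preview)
Your argument is correct, but the route differs from the paper's. You transplant the proof of Lemma~\ref{lemma.k-star} wholesale to the dot-product setting: expand $\mu^2$ directly, isolate $s=0$, write the remainder as a nonnegative sum of squares, extend one $y$-variable to all of $\mathbb{Z}_n^d$, decompose by $\mathrm{val}(s)=\beta$, and finish with the kernel count~\eqref{eq.solutions}. The paper instead exploits a feature unique to the dot-product: because $x\cdot y$ is bilinear, one has the closed-form Fourier identity $\widehat{\mu}_{y^1,\dots,y^k}(s_1,\dots,s_k)=n^{d-k}\widehat{E}(s_1y^1+\dots+s_ky^k)$, and the whole lemma becomes a Plancherel computation on $\sum_{y^i,s_i}|\widehat{E}(\sum s_iy^i)|^2$, with the $\beta$-blocks handled via the preimage count $\rho(x)=|\{y:n_\beta y=x\}|\le n_\beta^d$. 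Your approach is more elementary and unifies the distance and dot-product lemmas under one argument; the paper's approach is slicker here precisely because the nuisance quadratic term $\Vert x\Vert$ present in the distance case is absent, so the Fourier transform factors cleanly. Both yield the same bound, and both hinge on the same arithmetic fact $n'_\beta\ge\gamma(n)$.
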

\begin{proof}
We proceed by induction on $k$. For the initial case $k = 1,$ we use the notation $\mu_y(t)$ instead of $\mu_{y^1}(t_1).$ More precisely, define the counting function 
\[ \mu_y(t) := \left| \left\{ x \in E\, : \, x \cdot y = t \right\}\right|.\] 
Applying the orthogonality property, we have
\[ \mu_y(t) = \sum_{x \in E} n^{-1} \sum_{s \in \mathbb{Z}_n}\chi\left(s(t-x\cdot y)\right) = n^{-1}\sum_{x \in \mathbb{Z}_n^d}\sum_{s \in \mathbb{Z}_n}\chi\left(s(t-x\cdot y)\right)E(x).\]  
It follows that  
\begin{align*}
\widehat{\mu_y}(s) &= n^{-1} \sum_{t \in \mathbb{Z}_n} \chi\left(-ts\right) \mu_y(t)  = n^{-2} \sum_{t \in \mathbb{Z}_n} \chi(-ts)\sum_{x \in \mathbb{Z}_n^d}\sum_{s' \in \mathbb{Z}_n}\chi\left(s'(t-x\cdot y)\right)E(x) \\
&= n^{-2} \sum_{x \in \mathbb{Z}_n^d}E(x)\sum_{s' \in \mathbb{Z}_n}\chi\left(-s' x \cdot y\right)\sum_{t \in \mathbb{Z}_n} \chi\left(t(s'-s)\right) \\
&= n^{-1} \sum_{x \in \mathbb{Z}_n^d} E(x) \chi(-s x \cdot y).   \quad \quad (\text{If } s' \neq s, \text{the sum is vanished by Lemma \ref{ortho-property}}.)
\end{align*}
Therefore, we obtain $\widehat{\mu_y}(s) = n^{d-1} \widehat{E}(sy).$ Hence, 
\begin{align*}
\sum_{y \in E} \sum_{s \in \mathbb{Z}_n} \left|\widehat{\mu_y}(s) \right|^2 = n^{2(d-1)} \sum_{y \in E} \sum_{s \in \mathbb{Z}_n} \left|\widehat{E}(sy)\right|^2 = q^{-2}|E|^3 + \mathcal{K}
\end{align*} 
where $\mathcal{K} = n^{2(d-1)}\sum_{s \neq 0} \sum_{y \in E}  \left|\widehat{E}(sy)\right|^2.$ 

Without loss of generality, we suppose that $n$ has the prime decomposition $n = p_1^{\alpha_1}p_2^{\alpha_2}\dots p_{\ell}^{\alpha_{\ell}},$ where $2 < p_1 < p_2 < \dots < p_k$ and $\alpha_i > 0$ for each $i = 1,2,\dots,\ell.$ Define 
\[ val(s) := (val_{p_1}(s),\dots,val_{p_\ell}(s))\]
where $val_{p_i}(x) = r$ if $p_i^r | x$ but $p_i^{r+1} \nmid x.$ 
For each $s \neq 0,$ we write $s = p_1^{\beta_1}\dots p_\ell^{\beta_\ell} \overline{s}$ where $\overline{s} \in \mathbb{Z}_{n'}^{\times}$ is uniquely determined for $n'=p_1^{\alpha_1 - \beta_1} \dots p_k^{\alpha_\ell - \beta_\ell}$ and $\beta_i \ge 0.$ Since $s \neq 0$, $\beta_i < \alpha_i$ for some $i$. We will use the notation $\sum_{\beta}$ to denote the sum over all such $(\beta_1,\dots,\beta_\ell)$'s. Now, rewrite $\mathcal{K}$ as $\mathcal{K} = \sum_{\beta} \mathcal{K}_\beta$, where 
\[ \mathcal{K}_\beta = n^{2(d-1)} \sum_{s \in \mathbb{Z}_n:\, val(s) = \beta} \sum_{y \in E} \left|\widehat{E}(sy)\right|^2.\]
For each $\beta = (\beta_1, \dots,\beta_{\ell}),$ denote $n'_\beta = p_1^{\alpha_1 - \beta_1} \dots p_{\ell}^{\alpha_\ell - \beta_\ell}$ and $n_\beta = p_1^{\beta_1},\dots,p_{\ell}^{\beta_\ell}.$ Now, we will bound $\mathcal{K}_\beta.$ Applying the orthogonality property (Lemma \ref{ortho-property}), we have  
\begin{align*}
\mathcal{K}_\beta &= n^{2(d-1)} \sum_{\overline{s} \in \mathbb{Z}^{\times}_{n'_\beta}} \sum_{y \in E} \left|\widehat{E}\left(p_1^{\beta_1}\dots p_{\ell}^{\beta_\ell} \overline{s} y\right)\right|^2 \\ 
&= n^{2(d-1)} \sum_{\overline{s} \in \mathbb{Z}^{\times}_{n'_\beta}} \sum_{y \in \mathbb{Z}^d_n} E\left(y/\overline{s}\right)\left|\widehat{E}\left(p_1^{\beta_1}\dots p_{\ell}^{\beta_\ell}y\right)\right|^2. 
\end{align*}
Set $\rho(x)=\left|\left\{ y \in \mathbb{Z}_n^d\,:\, p_1^{\beta_1}\dots p_{\ell}^{\beta_\ell}y = x\right\}\right|$. Since $\sum_{\overline{s} \in \mathbb{Z}^{\times}_{n'_\beta}} E(y/\overline{s}) \leq n'_\beta,$ we obtain 
\begin{align*}
\mathcal{K}_\beta &\leq n^{2(d-1)} n'_\beta \sum_{y \in \mathbb{Z}_n^d} \left|\widehat{E}\left(p_1^{\beta_1}\dots p_{\ell}^{\beta_\ell}y\right)\right|^2 \\
&\leq n'_\beta n^{2(d-1)} \sum_{x \in \mathbb{Z}^d_n} \rho(x)\left|\widehat{E}(x)\right|^2  \\
&\leq \left(\max_{x \in \mathbb{Z}_n^d} \rho(x)\right) n'_\beta n^{2(d-1)}  \sum_{x \in \mathbb{Z}^d_n}\left|\widehat{E}(x)\right|^2 \\
 &= \left(\max_{x \in \mathbb{Z}_n^d} \rho(x)\right) n'_\beta n^{d-2}|E|,
\end{align*} 
where the last line follows by \eqref{Plancherel-Eq}.
On the other hand, similarly to the proof of \eqref{eq.solutions}, it is not hard to show that 
\[\rho(x)=\left|\left\{ y \in \mathbb{Z}_n^d\,:\, p_1^{\beta_1}\dots p_{\ell}^{\beta_\ell}y = x\right\}\right| \leq \left(p_1^{\beta_1}\dots p_{\ell}^{\beta_\ell}\right)^{d} = n_\beta^d.\] 
It implies that  
\[ \mathcal{K}_\beta \leq n_\beta^d n'_\beta n^{d-2} |E| = \dfrac{n^{2d-2}|E|}{n'^{d-1}_{\beta}} \leq \dfrac{n^{2d-2}|E|}{\gamma(n)^{d-1}}.\] 
Therefore, applying Plancherel identity \eqref{Plancherel-Eq} again, we have 
\[ \sum_{ t \in \mathbb{Z}_n} \sum_{y \in E} \mu^2_y(t) = n \sum_{s \in \mathbb{Z}_n} \sum_{y \in E} \left|\widehat{\mu_y}(s)\right|^2 \leq  n^{-1}|E|^3 + \dfrac{n^{2d-1}|E|}{\gamma(n)^{d-1}}.\]
This concludes the proof for the initial case $k = 1$ of Lemma \ref{lemma-k-dot.product}. 

Now, suppose that the statement holds for $k-1$
\[ \mathcal{K}_{k-1} = \sum_{y^1,\dots,y^{k-1} \in E} \sum_{t_1,\dots,t_{k-1} \in \mathbb{Z}_n} \mu^2_{y^1,\dots,y^{k-1}}(t_1,\dots,t_{k-1}) \ll \dfrac{|E|^{k+1}}{n^{k-1}} + \frac{\tau(n)n^{2d-1}}{\gamma(n)^{d-1}}|E|^{k-1}.\] 
We will show that the statement holds for $k$.
Firstly, set $\mathbf{s} = (s_1,\dots,s_k) \in \mathbb{Z}_n^k,$ we have 
\begin{align*}
\widehat{\mu}_{y^1,\dots,y^k}(s_1,\dots,s_k) &= n^{-k} \sum_{t_1,\dots,t_k \in \mathbb{Z}_n} \chi\left(-t_1s_1- \dots - t_ks_k\right) \mu_{y^1,\dots,y^k}(t_1,\dots,t_k) \\
&= n^{-k} \sum_{t_1,\dots,t_k \in \mathbb{Z}_n} \chi\left(-t_1s_1- \dots - t_ks_k\right) \sum_{x \in E} \prod_{i=1}^k \left(n^{-1} \sum_{s' \in \mathbb{Z}_n} \chi\left(s'(t_i - x \cdot y^i)\right)\right) \\
&= n^{-2k}\sum_{\mathbf{t} = (t_1,\dots,t_k) \in \mathbb{Z}^k_n}\chi\left(-\mathbf{t}\cdot \mathbf{s}\right) \sum_{x \in E} \sum_{\mathbf{s'}=(s'_1,\dots,s'_k)\in \mathbb{Z}_n^k} \prod_{i=1}^k \chi\left(s'_i t_i - s'_i x\cdot y^i\right) \\
&= n^{-2k}\sum_{x \in E}\sum_{\mathbf{s'}=(s'_1,\dots,s'_k)\in \mathbb{Z}_n^k} \sum_{\mathbf{t} = (t_1,\dots,t_k) \in \mathbb{Z}^k_n}\chi\left(\mathbf{t}\cdot \left(\mathbf{s'}-\mathbf{s}\right)\right)\chi\left(-x\cdot (s'_1y^1+\dots+s'_ky^k)\right).
\end{align*}
Applying the orthogonality property, we have  
\begin{align*}
\sum_{\mathbf{s'} \in \mathbb{Z}_n^k \, : \, \mathbf{s'} \neq \mathbf{s}} \sum_{\mathbf{t} \in \mathbb{Z}_n^k} \chi\left( \mathbf{t} \cdot (\mathbf{s'}-\mathbf{s})\right) = 0.  
\end{align*}
Therefore, we obtain 
\begin{align*}
\widehat{\mu}_{y^1,\dots,y^k}(s_1,\dots,s_k) &= n^{-k} \sum_{x \in E} \chi\left(-x \cdot \left(s_1y^1+\dots+s_ky^k\right)\right) \\
&= n^{d-k} \widehat{E}\left(s_1y^1+\dots+s_ky^k\right). 
\end{align*}
It follows that 
\[\sum_{y^1,\dots,y^k \in E} \sum_{s_1,\dots,s_k \in \mathbb{Z}_n} \left|\widehat{\mu}_{y^1,\dots,y^k}(s_1,\dots,s_k)\right|^2 = n^{2(d-k)}\sum_{y^1,\dots,y^k \in E} \sum_{s_1,\dots,s_k \in \mathbb{Z}_n} \left|\widehat{E}\left(s_1y^1+\dots+s_ky^k\right)\right|^2. \]
Separating the case $s_k \neq 0$, we have 
\[\sum_{y^1,\dots,y^k \in E} \sum_{s_1,\dots,s_k \in \mathbb{Z}_n} \left|\widehat{\mu}_{y^1,\dots,y^k}(s_1,\dots,s_k)\right|^2 = I + II,\]
where 
\begin{align*}
I &= n^{2(d-k)}\sum_{\substack{y^1,\dots,y^{k-1} \in E \\ y^k \in E}} \sum_{s_1,\dots,s_{k-1} \in \mathbb{Z}_n} \left|\widehat{E}\left(s_1y^1+\dots+s_{k-1}y^{k-1}\right)\right|^2,  \\
II &= n^{2(d-k)}\sum_{s_k \neq 0}\sum_{y^1,\dots,y^k \in E} \sum_{s_1,\dots,s_{k-1} \in \mathbb{Z}_n} \left|\widehat{E}\left(s_1y^1+\dots+s_ky^k\right)\right|^2.
\end{align*}
Using the above estimation, applying Plancherel identity and the induction hypothesis, we have
\begin{align*}
I &= n^{2(d-k)}|E|\sum_{y^1,\dots,y^{k-1} \in E} \sum_{s_1,\dots,s_{k-1} \in \mathbb{Z}_n} \left|\widehat{E}\left(s_1y^1+\dots+s_{k-1}y^{k-1}\right)\right|^2 \\
&= |E| \sum_{y^1,\dots,y^{k-1} \in E} \sum_{s_1,\dots,s_{k-1} \in \mathbb{Z}_n} \left|\widehat{\mu}_{y^1,\dots,y^{k-1}}(s_1,\dots,s_{k-1})\right|^2  \\
&= n^{-k-1}|E| \mathcal{K}_{k-1} \\
&\ll  \dfrac{|E|^{k+2}}{n^{2k}} + \frac{\tau(n)n^{2d-k-2}}{\gamma(n)^{d-1}}|E|^{k}.
\end{align*}
Now, we will bound the second term $II.$ Note that $II = \sum_{\beta} II_\beta$ where 
\[II_\beta = n^{2(d-k)}\sum_{\substack{ s_k \neq 0: \\ val(s_k) = \beta} }\sum_{y^1,\dots,y^k \in E} \sum_{s_1,\dots,s_{k-1} \in \mathbb{Z}_n} \left|\widehat{E}\left(s_1y^1+\dots+s_ky^k\right)\right|^2.  \] 
We proceed similar to the initial case. We have
\begin{align*} 
II_\beta &= n^{2(d-k)} \sum_{y^1,\dots,y^{k-1} \in E} \sum_{s_1,\dots,s_{k-1} \in \mathbb{Z}_n} \left(\sum_{\overline{s}_k \in \mathbb{Z}^{\times}_{n'_\beta}}\sum_{y^k \in \mathbb{Z}_n^d} E(y^k) \left| \widehat{E}\left(s_1y^1+\dots + s_{k-1}y^{k-1} + n_\beta \overline{s}_k y^k \right) \right|^2\right) \\
&= n^{2(d-k)} \sum_{y^1,\dots,y^{k-1} \in E} \sum_{s_1,\dots,s_{k-1} \in \mathbb{Z}_n}\left(\sum_{\overline{s}_k \in \mathbb{Z}^{\times}_{n'_\beta}}\sum_{y^k \in \mathbb{Z}_n^d} E(y^k/\overline{s}_k) \left| \widehat{E}\left(s_1y^1+\dots + s_{k-1}y^{k-1} + n_\beta y^k \right) \right|^2\right) \\
&< n'_\beta n^{2(d-k)}\sum_{y^1,\dots,y^{k-1} \in E} \sum_{s_1,\dots,s_{k-1} \in \mathbb{Z}_n}  \sum_{y^k \in \mathbb{Z}_n^d} \left|\widehat{E}(s_1y^1+\dots+s_{k-1}y^{k-1} + n_\beta y^k )\right|^2 \\ 
&\leq n'_\beta n^{2(d-k)}\sum_{y^1,\dots,y^{k-1} \in E} \sum_{s_1,\dots,s_{k-1} \in \mathbb{Z}_n}  \sum_{x \in \mathbb{Z}_n^d} \rho\left(x-s_1y^1-\dots - s_{k-1}y^{k-1}\right) \left|\widehat{E}(x)\right|^2 \\
&\leq \left(\max_{x \in \mathbb{Z}_n^d} \rho(x)\right) n'_\beta n^{2d-k-1} |E|^{k-1} \sum_{x \in \mathbb{Z}_n^d} \left|\widehat{E}(x)\right|^2.
\end{align*}
Using $\rho(x) \leq n_\beta^d$ and applying Plancherel identity, we obtain 
\begin{align*}
II_\beta \leq n^d_\beta n'_\beta n^{2d-k-1} |E|^{k-1} \left( n^{-d}|E|\right) = \dfrac{n^{2d-k-1}|E|^{k}}{n'^{d-1}_\beta} \leq \dfrac{n^{2d-k-1}}{\gamma(n)^{d-1}} |E|^k. 
\end{align*}
Therefore, we have  
\[ II = \sum_{\beta} II_{\beta} \leq \dfrac{\tau(n)n^{2d-k-1}}{\gamma(n)^{d-1}} |E|^k.\] 
Finally, applying Plancherel identity, we have 
\begin{align*}
 \mathcal{K}_k &= n^{k} \left( I + II \right) \\
 &\ll n^{k} \left( \dfrac{|E|^{k+2}}{n^{2k}} + \dfrac{\tau(n)n^{2d-k-2}}{\gamma(n)^{d-1}}|E|^k + \dfrac{\tau(n) n^{2d-k-1}}{\gamma(n)^{d-1}}|E|^k\right) \\ 
 &\ll \dfrac{|E|^{k+2}}{n^{k}} + \dfrac{\tau(n) n^{2d-1}}{\gamma(n)^{d-1}}|E|^k.
\end{align*} 
This concludes the proof of Lemma \ref{lemma-k-dot.product}. 
\end{proof}

We are now ready to give a proof of Theorem \ref{k-star dot product}.

\begin{proof}[\bf Proof of Theorem \ref{k-star dot product}]
Applying the Cauchy-Schwarz inequality, we have 
\begin{align*}
|E|^{2k+2} &= \left( \sum_{y^1,\dots,y^k \in E} \sum_{t_1,\dots, t_k \,\in \mathbb{Z}_n} \mu_{y^1,\dots,y^k}(t_1,\dots,t_k)\right)^2 \\
&\leq \sum_{y^1,\dots,y^k \in E} \left| \Pi_{y^1,\dots,y^k}(E)\right| \cdot \sum_{y^1,\dots,y^k \in E} \sum_{t_1,\dots,t_k \in \mathbb{Z}_n}  \mu^2_{y^1,\dots,y^k}(t_1,\dots,t_k).
\end{align*}
It follows from Lemma \ref{lemma-k-dot.product} that 
\[|E|^{2k+2} \ll \sum_{y^1,\dots,y^k \in E} \left|\Pi_{y^1,\dots,y^k}(E)\right| \cdot \left( \dfrac{|E|^{k+2}}{n^k} + \frac{\tau(n)n^{2d-1}}{\gamma(n)^{d-1}} |E|^k\right). \] 
Therefore, we have 
\[ \frac{1}{|E|^k} \sum_{y^1,\dots,y^k} \left|\Pi_{y^1,\dots,y^k}(E)\right| \gg \frac{|E|^{k+2}}{\dfrac{|E|^{k+2}}{n^k}+\frac{\tau(n)n^{2d-1}}{\gamma(n)^{d-1}}|E|^k} \gg n^k\]
under the assumption 
\[ |E| \gg \dfrac{\sqrt{\tau(n)} n^{d+\frac{k-1}{2}}}{\gamma(n)^{(d-1)/2}}.\]
This concludes the proof of Theorem \ref{k-star dot product}.
\end{proof}

\subsection{Distribution of dot-product simplices}

If $k = 1,$ Theorem \ref{main.k-simplices dot product} follows directly from Theorem \ref{k-star dot product}. We only need to consider the case $k \ge 2.$ We will need the following generalization of Theorem \ref{k-star dot product}.
\begin{lemma} \label{lemma.k-simplices with dot-product}
Given $E \subset \mathbb{Z}_n^d,$ let $Y \subset E \times E \times \dots \times E = E^u, \, u \ge 2\,$ with $Y \sim |E|^u.$ Define 
\[ Y' = \left\{(y^1, \dots, y^{u-1}): (y^1,\dots,y^u) \in Y \, \text{ for some } y^u \in E \right\}. \]
For each $(y^1,\dots,y^{u-1}) \in Y'$, we define
\[ Y(y^1,\dots,y^u) = \left\{ y^u \in E: (y^1,\dots,y^u) \in Y^u \right\}.\] 
If 
\[ |E| \gg \dfrac{\sqrt{\tau(n)} n^{d+\frac{u-2}{2}}}{\gamma(n)^{(d-1)/2}},\]
 then we have
\[ \frac{1}{|Y'|} \sum_{(y^1,\dots,y^{u-1}) \in Y'} \left|\Pi_{y^1,\dots,y^{u-1}}\left(Y(y^1,\dots,y^{u-1})\right)\right| \gg n^{u-1}, \]
where 
\[ \Pi_{y^1,\dots,y^{u-1}}\left( Y(y^1,\dots,y^{u-1})\right) =  \left\{ \left(y^u \cdot y^1 , \dots,  y^{u} \cdot y^{u-1} \right) \in \left(\mathbb{Z}_n\right)^{u-1} \, : \, y^u \in Y(y^1,\dots,y^{u-1}) \right\}.\]
\end{lemma}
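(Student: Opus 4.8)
The plan is to run the proof of Lemma~\ref{lemma.k-simplices} almost verbatim, replacing the distance-based incidence functions by dot-product ones and invoking Lemma~\ref{lemma-k-dot.product} in place of Lemma~\ref{lemma.k-star}. First, for each $(y^1,\dots,y^{u-1}) \in Y'$ and each $(t_1,\dots,t_{u-1}) \in \left(\mathbb{Z}_n\right)^{u-1}$, I would introduce the restricted counting function
\[ \mu_{y^1,\dots,y^{u-1}}^{Y(y^1,\dots,y^{u-1})}(t_1,\dots,t_{u-1}) := \left|\left\{ y^u \in Y(y^1,\dots,y^{u-1}) \,:\, y^u \cdot y^i = t_i,\ i = 1,\dots,u-1 \right\}\right|, \]
and record the trivial pointwise bound
\[ \mu_{y^1,\dots,y^{u-1}}^{Y(y^1,\dots,y^{u-1})}(t_1,\dots,t_{u-1}) \le \mu_{y^1,\dots,y^{u-1}}(t_1,\dots,t_{u-1}), \]
where the right-hand side is the dot-product counting function over all of $E$ introduced just before Lemma~\ref{lemma-k-dot.product} (this is immediate since $Y(y^1,\dots,y^{u-1}) \subseteq E$). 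Summing the left-hand side over the $t_i$ gives $|Y(y^1,\dots,y^{u-1})|$, and a further sum over $(y^1,\dots,y^{u-1}) \in Y'$ gives $|Y| \gg |E|^u$.

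Next I would apply the Cauchy--Schwarz inequality, using that for fixed $(y^1,\dots,y^{u-1})$ the function $(t_1,\dots,t_{u-1}) \mapsto \mu_{y^1,\dots,y^{u-1}}^{Y(y^1,\dots,y^{u-1})}(t_1,\dots,t_{u-1})$ is supported on $\Pi_{y^1,\dots,y^{u-1}}\big(Y(y^1,\dots,y^{u-1})\big)$, to obtain
\[ |Y|^2 \le \left( \sum_{(y^1,\dots,y^{u-1}) \in Y'} \left| \Pi_{y^1,\dots,y^{u-1}}\big(Y(y^1,\dots,y^{u-1})\big) \right| \right) \cdot \sum_{(y^1,\dots,y^{u-1}) \in Y'} \sum_{t_1,\dots,t_{u-1} \in \mathbb{Z}_n} \left(\mu_{y^1,\dots,y^{u-1}}^{Y(y^1,\dots,y^{u-1})}(t_1,\dots,t_{u-1})\right)^2. \]
By the pointwise bound above, and then enlarging the outer index set from $Y' \subseteq E^{u-1}$ to all of $E^{u-1}$, the second factor is at most $\mathcal{K}_{u-1}$, so Lemma~\ref{lemma-k-dot.product} (with $k$ replaced by $u-1$) gives
\[ \sum_{(y^1,\dots,y^{u-1}) \in Y'} \sum_{t_1,\dots,t_{u-1} \in \mathbb{Z}_n} \left(\mu_{y^1,\dots,y^{u-1}}^{Y(y^1,\dots,y^{u-1})}(t_1,\dots,t_{u-1})\right)^2 \ll \dfrac{|E|^{u+1}}{n^{u-1}} + \dfrac{\tau(n)n^{2d-1}}{\gamma(n)^{d-1}}|E|^{u-1}. \]

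Finally, since $|Y| \gg |E|^u$ and each element of $Y'$ has at most $|E|$ preimages in $Y$, we have $|Y'| \sim |E|^{u-1}$; dividing the Cauchy--Schwarz estimate through by $|Y'|$ and substituting the previous display yields
\[ \frac{1}{|Y'|} \sum_{(y^1,\dots,y^{u-1}) \in Y'} \left| \Pi_{y^1,\dots,y^{u-1}}\big(Y(y^1,\dots,y^{u-1})\big) \right| \gg \dfrac{|E|^{u+1}}{\dfrac{|E|^{u+1}}{n^{u-1}} + \dfrac{\tau(n)n^{2d-1}}{\gamma(n)^{d-1}}|E|^{u-1}}. \]
The right-hand side is $\gg n^{u-1}$ exactly when the first term in the denominator dominates, i.e.\ when $|E|^2 \gg \tau(n)\,n^{2d+u-2}\,\gamma(n)^{-(d-1)}$, which is precisely the hypothesis $|E| \gg \sqrt{\tau(n)}\,n^{d+\frac{u-2}{2}}\,\gamma(n)^{-(d-1)/2}$. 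I expect no genuine obstacle here: all of the analytic content has already been absorbed into Lemma~\ref{lemma-k-dot.product}, and the remaining points --- arranging the Cauchy--Schwarz so that the factor $\big|\Pi_{y^1,\dots,y^{u-1}}(Y(y^1,\dots,y^{u-1}))\big|$ appears (via the support observation), the harmless enlargement of the index set in the second factor, and the estimate $|Y'| \sim |E|^{u-1}$ --- are routine and exactly parallel to the corresponding steps in the proof of Lemma~\ref{lemma.k-simplices}.
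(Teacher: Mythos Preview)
Your proposal is correct and follows the paper's own proof essentially line by line: define the restricted incidence function, bound it pointwise by the unrestricted one, apply Cauchy--Schwarz with the support observation, enlarge the second factor to $E^{u-1}$ and invoke Lemma~\ref{lemma-k-dot.product}, then divide by $|Y'| \sim |E|^{u-1}$. If anything, you are slightly more careful than the paper, which writes $|E|^2$ on the left of the Cauchy--Schwarz display where it should be $|Y|^2$ (as you have it); the subsequent computation in the paper is only consistent with $|Y|^2 \sim |E|^{2u}$.
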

\begin{proof}
For each $(t_1, \dots, t_{u-1}) \in \left(\mathbb{Z}_n\right)^{u-1},$ define the incidence function on $Y(y^1,\dots,y^{u-1})$ as follows 
\[ \mu_{y^1,\dots,y^{u-1}}^{Y(y^1,\dots,y^{u-1})}(t_1,\dots,t_{u-1}) =\left| \left\{ y^u \in Y(y^1,\dots,y^{u-1}) \, : \,  y^u \cdot y^1 = t_1, \dots, y^u \cdot y^{u-1} = t_{u-1}  \right\} \right|.\] 
It is easy to see that
\[ \mu_{y^1,\dots,y^{u-1}}^{Y(y^1,\dots,y^{u-1})}(t_1,\dots,t_{u-1})  \leq \mu_{y^1,\dots,y^{u-1}}(t_1,\dots,t_u),\]
where 
\[ \mu_{y^1,\dots,y^{u-1}}(t_1,\dots,t_{u-1}) = \left| \left\{ y^u \in E \, : \,  y^u \cdot y^1  = t_1, \dots, y^u \cdot y^{u-1}  = t_{u-1}  \right\} \right|.\]
By the Cauchy-Schwarz inequality, we have 
\begin{align*}
|E|^2 &= \left( \sum_{(y^1,\dots,y^{u-1}) \in Y'} \sum_{t_1,\dots,t_{u-1} \in \mathbb{Z}_n} \mu_{y^1,\dots,y^{u-1}}^{Y(y^1,\dots,y^{u-1})}(t_1,\dots,t_{u-1})\right)^2 \\ 
&\leq \left( \sum_{(y^1,\dots,y^{u-1}) \in Y'} \left|\Pi_{y^1,\dots,y^{u-1}}\left( Y(y^1,\dots,y^{u-1})\right)\right|\right)\cdot \left(\sum_{(y^1,\dots,y^{u-1}) \in E} \sum_{t_1,\dots,t_{u-1} \in \mathbb{Z}_n} \mu^2_{y^1,\dots,y^{u-1}}(t_1,\dots,t_u)\right).
\end{align*}
Using Lemma \ref{lemma-k-dot.product}, we obtain
\[ |E|^2 \leq \left( \sum_{(y^1,\dots,y^{u-1}) \in Y'} \left|\Pi_{y^1,\dots,y^{u-1}}\left( Y(y^1,\dots,y^{u-1})\right)\right|\right)\cdot \left( \dfrac{|E|^{u+1}}{n^{u-1}} + \frac{\tau(n)n^{2d-1}}{\gamma(n)^{d-1}}|E|^{u-1}\right). \]
On the other hand, since $Y' \sim |E|^{u-1},$ we have
\[ \frac{1}{|Y'|}\sum_{(y^1,\dots,y^{u-1}) \in Y'} \left|\Pi_{y^1,\dots,y^{u-1}}\left( Y(y^1,\dots,y^{u-1})\right)\right| \gg \dfrac{|E|^{u+1}}{\frac{|E|^{u+1}}{n^{u-1}} + \frac{\tau(n)n^{2d-1}}{\gamma(n)^{d-1}}|E|^{u-1}} \gg n^{u-1}\]
under the assumption 
\[ |E| \gg \dfrac{\sqrt{\tau(n)} n^{d+\frac{u-2}{2}}}{\gamma(n)^{(d-1)/2}}.\]
This completes the proof of Lemma \ref{lemma.k-simplices with dot-product}.   
\end{proof}

As a direct consequence, we have the following corollary.
\begin{corollary}\label{coro-k-dot-product}
Let $E \subset \mathbb{Z}_n^d$ and $Y \subset E \times \dots \times E = E^u, \, u \ge 2,$ with $|Y| \sim |E|^u.$ If 
\[|E| \gg \dfrac{\sqrt{\tau(n)} n^{d+\frac{u-2}{2}}}{\gamma(n)^{(d-1)/2}}, \]
then there exists $\mathcal{Y}^{(1)} \subset Y' \subset E^{u-1}$ with $|\mathcal{Y}^{(1)}| \sim |Y'| \sim |E|^{u-1}$ such that for every $(y^1,\dots,y^{u-1}) \in \mathcal{Y}^{(1)},$ we have 
	\begin{equation*}
	 \left| \Pi_{y^1,\dots,y^{u-1}}\left( Y(y^1,\dots,y^{u-1}\right)\right| \gg n^{u-1}.
	\end{equation*}
Namely, the set $Y$ determines a positive proportion of all dot-product $(u-1)$-simplices which are based on a $(u-2)-$simplex given by any element $(y^1,\dots,y^{u-1}) \in \mathcal{Y}^{(1)}.$
\end{corollary}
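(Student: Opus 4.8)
The plan is to deduce the corollary from Lemma \ref{lemma.k-simplices with dot-product} by a routine averaging (Markov-type) argument. First I would pin down the size of $Y'$. Since $Y' \subset E^{u-1}$ we trivially have $|Y'| \le |E|^{u-1}$; on the other hand every $(y^1,\dots,y^u) \in Y$ projects to some $(y^1,\dots,y^{u-1}) \in Y'$, and each element of $Y'$ receives at most $|E|$ such preimages, so $|Y'| \ge |Y|/|E| \gg |E|^{u-1}$. Hence $|Y'| \sim |E|^{u-1}$, which is precisely the hypothesis needed to invoke the lemma.

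Next I would apply Lemma \ref{lemma.k-simplices with dot-product} with the given lower bound on $|E|$, which yields
\[ \sum_{(y^1,\dots,y^{u-1}) \in Y'} \left|\Pi_{y^1,\dots,y^{u-1}}\left(Y(y^1,\dots,y^{u-1})\right)\right| \gg |Y'|\, n^{u-1} \gg |E|^{u-1} n^{u-1}. \]
The key additional observation is the trivial upper bound $\left|\Pi_{y^1,\dots,y^{u-1}}\left(Y(y^1,\dots,y^{u-1})\right)\right| \le n^{u-1}$, valid for every tuple since this set is contained in $(\mathbb{Z}_n)^{u-1}$.

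Finally I would run the standard dichotomy: let $\mathcal{Y}^{(1)}$ be the set of tuples $(y^1,\dots,y^{u-1}) \in Y'$ for which $\left|\Pi_{y^1,\dots,y^{u-1}}\left(Y(y^1,\dots,y^{u-1})\right)\right| \ge c\, n^{u-1}$ for a suitably small constant $c>0$. Splitting the sum above over $\mathcal{Y}^{(1)}$ and its complement, bounding the former terms by $n^{u-1}$ and the latter by $c\, n^{u-1}$, and choosing $c$ smaller than half the implied constant, forces $|\mathcal{Y}^{(1)}| \gg |Y'| \sim |E|^{u-1}$; combined with $\mathcal{Y}^{(1)} \subset Y' \subset E^{u-1}$ this gives $|\mathcal{Y}^{(1)}| \sim |Y'| \sim |E|^{u-1}$, as claimed. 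The concluding sentence of the statement is then just the geometric reading of this fact: each such tuple is a $(u-2)$-simplex in the dot-product sense that admits $\gg n^{u-1}$ distinct completions to a dot-product $(u-1)$-simplex inside $Y$. There is no genuine obstacle here; the only points requiring care are tracking the implicit constants through the averaging step and making sure the hypothesis $|Y| \sim |E|^u$ is actually used (it is needed both to obtain $|Y'| \sim |E|^{u-1}$ above and, inside the lemma, for the Cauchy--Schwarz lower bound).
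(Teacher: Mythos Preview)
Your proposal is correct and is exactly the standard averaging (Markov-type) argument that the paper has in mind; the paper itself gives no proof and simply labels the corollary ``a direct consequence'' of Lemma~\ref{lemma.k-simplices with dot-product}. Your added justification that $|Y'|\sim|E|^{u-1}$ and the use of the trivial bound $|\Pi_{y^1,\dots,y^{u-1}}(\cdot)|\le n^{u-1}$ are precisely the details needed to make that implication rigorous.
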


\begin{proof}[\bf Proof of Theorem \ref{main.k-simplices dot product}]
The proof of Theorem \ref{main.k-simplices dot product} is similary to the proof of Theorem \ref{main.k-simplices}, in which we use Theorem \ref{k-star dot product} and Corollary \ref{coro-k-dot-product} instead of Theorem \ref{k-star} and Corollary \ref{lem.k-simplices}.
\end{proof}

\bibliographystyle{amsplain}

\begin{thebibliography}{10}

\bibitem{bennett} M. Bennett, D. Hart, A. Iosevich, J. Pakianathan, and M. Rudnev, \textit{Group actions and geometric combinatorics in $\mathbb{F}_q^d$}, Forum Mathematicum, \textbf{29}(1) (2017), 91 -- 110.

\bibitem{b1} J. Bourgain, \textit{A Szemeredi type theorem for sets of positive density}, Israel J. Math., \textbf{54} (1986), no. 3, 307--331.

\bibitem{bkt} J. Bourgain, N. Katz, and T. Tao, \textit{A sum product estimate in finite fields and Applications}, Geom. Funct. Analysis, \textbf{14} (2004), 27--57.

\bibitem{chapman} 
J. Chapman, M. Erdogan, D. Hart, A. Iosevich, and D. Koh, \textit{Pinned distance sets, $k$-simplices, Woff's exponent in finite fields and sum-product estimates}, Math. Z. \textbf{271} (2012), 63-93.

\bibitem{cov} 
D. Covert, \textit{Results on the Erd\H{o}s-Falconer distance problem in $\mathbb{Z}_q^d$ for odd $q$}, Journal of Mathematical Analysis and Applications, \textbf{426} (2015), 727--733.

\bibitem{cip}  D. Covert, A. Iosevich, and J. Pakianathan, \textit{Geometric configurations in the ring of integers modulo $p^{\ell}$,} Indiana Univ. Math. J., \textbf{61}(5) (2012), 1949--1969.

\bibitem{fkw} H. Furstenberg, Y. Katznelson, and B. Weiss, \textit{Ergodic theory and configurations in sets of positive density}, Mathematics of Ramsey theory, 184--198, Algorithms Combin., 5, Springer, Berlin (1990).

\bibitem{guth-katz} L. Guth and N. Katz, \textit{On the Erd\H{o}s distinct distances problem in the plane,} Annals of Mathematics, \textbf{181}(2014), 1--36.

\bibitem{hi} D. Hart and A. Iosevich, \textit{Ubiquity of simplices in subsets of vector spaces over finite fields,} Analysis Mathematika, \textbf{34}(2007).

\bibitem{hi1}
D. Hart and A. Iosevich, \textit{Sums and products in finite fields: An integral geometric viewpoint,} in: Radon Transforms, Geometry, and Wavelets, Contemp. Math. 464, American Mathematical Society, Providence (2008), 129--135.

\bibitem{iosevich-trans}
D. Hart, A. Iosevich, D. Koh and M. Rudnev, \textit{Averages over hyperplanes, sum-product theory in vector spaces over finite fields and the Erd\H{o}s-Falconer distance conjecture}, Transactions of the AMS, \textbf{363} (2011) 3255--3275.

\bibitem{hiep}
D. H. Pham, T. Pham, and L. A. Vinh, \textit{An improvement on the number of simplices in $\mathbb{F}_q^d$}, Discrete Applied Mathematics, \textbf{221} (2017), 95--105.

\bibitem{ir} A. Iosevich and M. Rudnev, \textit{Erd\H{o}s distance problem in vector spaces over finite fields,} Trans. Amer. Math. Soc., \textbf{359} (2007), 6127--6142.

\bibitem{murphy} B. Murphy, G. Petridis, T. Pham, M. Rudnev, and S. Stevents, \textit{On the pinned distances problem over finite fields}, 	arXiv:2003.00510 [math.CO], 2020.

\bibitem{vinh-pg}
L. A. Vinh, \textit{On kaleidoscopic pseudo-randomness of finite Euclidean graphs,} Discussiones Mathematicae Graph Theory, \textbf{32}(2012), 279--297.

\bibitem{vinh-ajm}
L. A. Vinh, \textit{The solvability of norm, bilinear and quadratic equations over finite fields via spectral of graphs,} Forum Mathematicum, \textbf{26}(1) (2014), 141-175.

\bibitem{vinh3} 
L. A. Vinh, \textit{Product graphs, sum-product graphs and sum-product estimates over finite rings,} Forum Mathematicum, 27(3) (2014), 1639-1655.



\end{thebibliography}

\end{document}